\newtheorem{theorem}{Theorem}[section]{\bf}{\it}
\newtheorem*{theorem*}{Theorem}{\bf}{\it}
\newtheorem{lemma}[theorem]{Lemma}{\bf}{\it}
\newtheorem*{lemma*}{Lemma}{\bf}{\it}
{\bf}{\it}
\newtheorem*{fact*}{Fact}{\bf}{\it}
\newtheorem{proposition}[theorem]{Proposition}{\bf}{\it}
{\bf}{\it}
\theoremstyle{remark}
\newtheorem{remark}[theorem]{Remark}
\newtheorem*{remark*}{Remark}
\newtheorem{definition}[theorem]{Definition}
\numberwithin{equation}{section}
\newcommand{\C}{\mathbb{C}}
\newcommand{\N}{\mathbb{N}}
\newcommand{\Z}{\mathbb{Z}}
\newcommand{\bS}{\mathbb{S}}
\newcommand{\bD}{\mathbb{D}}
\newcommand{\dist}{\mathrm{dist}}
\newcommand{\eps}{\varepsilon}
\newcommand{\diam}{\operatorname{diam}}
\newcommand{\inv}{^{-1}}
\renewcommand{\emptyset}{\varnothing}
\begin{document}

\title{Sto\"ilow's theorem revisited}
\date{\today}

\author{Rami Luisto}
\address{Department of Mathematics and Statistics, P.O. Box 35, FI-40014 University of Jyv\"askyl\"a, Finland \and
Department of Mathematical Analysis, Sokolovska 83, Praha 8, 186 75, Charles University in Prague}
\email{rami.luisto@gmail.com}

\thanks{
R.L. has been partially supported by a grant of the Finnish Academy of Science and Letters,
  the Academy of Finland
  (grant 288501 `\emph{Geometry of subRiemannian groups}')
  and by the European Research Council
  (ERC Starting Grant 713998 GeoMeG `\emph{Geometry of Metric Groups}').
}

\author{Pekka Pankka}
\address{Department of Mathematics and Statistics, P.O. Box 68 (Gustaf H\"allstr\"omin katu 2b), FI-00014 University of Helsinki, Finland}
\email{pekka.pankka@helsinki.fi}

\thanks{P.P. has been partially supported by the Academy of Finland projects \#256228 and \#297258.}

\begin{abstract}
  Sto\"ilow's theorem from 1928 states that a continuous, open, and light map between surfaces is a discrete map with a discrete branch set. This result implies that such maps between orientable surfaces are locally modeled by power maps $z\mapsto z^k$ and admit a holomorphic factorization. 

  The purpose of this expository article is to give a proof of this classical theorem having readers in mind
  that are interested in continuous, open and discrete maps.
\end{abstract}

\subjclass[2010]{30-02}

\maketitle

\section{Introduction}

Sto\"ilow's classical theorem in \cite{Stoilow} states that \emph{a continuous, open and light map between surfaces is a discrete map which has a discrete branch set}. In what follows, we call this theorem \emph{Sto\"ilow's discreteness theorem}.

Recall that a continuous map $f\colon X\to Y$ between topological spaces is \emph{light} if the pre-image $f^{-1}(y)$ of each point $y\in Y$ is totally disconnected, and \emph{discrete} if $f^{-1}(y)$ is a discrete subset of $X$. A continuous map is \emph{open} if the image of each open set is an open set. The \emph{branch set} $B_f$ of a continuous map $f\colon X\to Y$ is the set of points $x\in X$ at which $f$ fails to be a local homeomorphism.

In \cite{Stoilow} Sto\"ilow shows that these maps are locally modeled by power maps $z\mapsto z^k$; see \cite[p.\,372]{Stoilow} for the discussion.
This local description indicates a deep connection between continuous, open and light maps and holomorphic maps between surfaces. This connection was found by Sto\"ilow \cite[p.\,120]{Stoilow-Book-Old} and Whyburn \cite[Theorem X.5.1, p.\,198]{Whyburn-AT} and \cite[p.\,103]{Whyburn-TA}: 
\emph{For a continuous, open and light map $f\colon S\to S'$ between orientable Riemann surfaces there exists a Riemann surface $\tilde S$ and a homeomorphism $h \colon S\to \tilde S$ for which $f \circ h^{-1} \colon \tilde S \to S'$ is a holomorphic map;} the Riemann surface $\tilde S$ in this statement is naturally the Riemann surface associated to the map $f$. The first edition of \cite{Whyburn-TA}, published in 1956, does not give this result a specific name, but already in the second edition from 1964 the result is referred as \emph{Sto\"ilow's theorem}.

In this expository article we discuss the proof of Sto\"ilow's discreteness theorem having readers in mind that are interested in discrete and open maps, such as quasiregular maps (see e.g.\;\cite{RickmanBook}) or Thurston maps (see e.g.\;\cite{Bonk-Meyer-book}).
For this reason we separate Sto\"ilow's theorem into two parts: the discreteness of the map and the discreteness of the branch set.

\begin{theorem}[Sto\"ilow, 1928]
  \label{thm:light_to_discrete}
  Let $\Omega$ be a domain in $\C$ and $f\colon \Omega \to \C$ a continuous, open and light map. Then $f$ is a discrete map.
\end{theorem}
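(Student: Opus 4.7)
The plan is to argue by contradiction: suppose there exist $y_0 \in \C$ and $x_0 \in f^{-1}(y_0)$ such that $x_0$ is not an isolated point of $f^{-1}(y_0)$. My first task is to construct a \emph{normal neighborhood} of $x_0$: a bounded connected open set $U$ with $x_0 \in U$, $\overline U \subset \Omega$, and $\partial U \cap f^{-1}(y_0) = \emptyset$. This uses lightness in the form that $K = f^{-1}(y_0) \cap \overline{D(x_0,r)}$ is a compact totally disconnected subset of $\C$ for small $r>0$; in any compact Hausdorff totally disconnected space, every point admits a neighborhood basis of clopen sets, so I can pick a clopen subset $C \subset K$ containing $x_0$ of small diameter. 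A $\delta$-tube around $C$ with $\delta < \tfrac{1}{2} \operatorname{dist}(C, K \setminus C)$ is an open set meeting $K$ exactly in $C$ and with boundary disjoint from $K$; its connected component through $x_0$ serves as $U$.

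Next I exploit openness. Since $f(\partial U)$ is compact and omits $y_0$, we have $\rho := \operatorname{dist}(y_0, f(\partial U)) > 0$. The set $D(y_0,\rho) \cap f(U)$ is open (as $f$ is open) and closed in $D(y_0,\rho)$: any limit point lies in $f(\overline U) = f(U) \cup f(\partial U)$, and since $D(y_0,\rho)$ avoids $f(\partial U)$ the limit is in $f(U)$. By connectedness of $D(y_0,\rho)$ and the fact that $y_0 \in f(U)$, it follows that $D(y_0,\rho) \subset f(U)$; moreover $f^{-1}(D(y_0,\rho)) \cap \overline U \subset U$, so $f$ restricts to a proper, open, light surjection from $f^{-1}(D(y_0,\rho)) \cap U$ onto $D(y_0,\rho)$.

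Now I use the accumulation hypothesis. For any $N \in \N$, pick distinct preimages $p_1,\ldots,p_N \in f^{-1}(y_0) \cap U$ and repeat the construction above to obtain pairwise disjoint normal neighborhoods $U_i \ni p_i$ inside $U$, together with radii $\rho_i > 0$ such that $D(y_0,\rho_i) \subset f(U_i)$. With $\rho^* := \min_i \rho_i$, every $y \in D(y_0,\rho^*)$ then has at least $N$ distinct preimages in $U$, one in each $U_i$.

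The main obstacle is converting this unbounded growth of preimages into a genuine contradiction. My approach is via the Brouwer topological degree: since $y_0 \notin f(\partial U)$, the integer $d = \deg(f,U,y_0)$ is well-defined, and additivity gives
\begin{equation*}
  d \;=\; \sum_{i=1}^N \deg(f, U_i, y_0) \;+\; \deg\bigl(f,\; U \setminus \textstyle\bigcup_i \overline{U_i},\; y_0\bigr).
\end{equation*}
To conclude $|d| \geq N$ for every $N$, one must show that each local degree $\deg(f, U_i, y_0)$ is a nonzero integer and that these local degrees share a common sign along the connected set $U$. This nonvanishing and sign-coherence is the essentially two-dimensional input, reflecting the eventual local power-map model $z \mapsto z^k$; it is provable by a Jordan-curve / winding-number computation on small circles around each $p_i$ combined with a global orientation argument exploiting the connectedness of $U$. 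This step---the nonvanishing and sign coherence of local degrees for light open maps of planar domains---is both the crux of the proof and the place where the two-dimensional topology enters essentially.
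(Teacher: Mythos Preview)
Your setup through the third paragraph is fine and parallels the paper's preliminaries (normal neighborhoods, the disk $B(f(x_0),\rho)\subset fU$, properness of the restriction). The divergence---and the gap---is in the last step.

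You reduce everything to the assertion that for a light open planar map the local degrees $\deg(f,U_i,y_0)$ are nonzero and all of one sign. That assertion is true, but you have not proved it, and it is not lighter than the theorem you are trying to establish. The standard route to sign-coherence (``light open maps are sense-preserving or sense-reversing'') passes \emph{through} Sto\"ilow's local model $z\mapsto z^k$, which in turn rests on discreteness; citing it here is circular. A direct winding-number argument for nonvanishing runs into the same wall: to compute $\deg(f,U_i,y_0)$ as a signed count of preimages one needs $f^{-1}(y)\cap U_i$ to be finite for nearby $y$, which is exactly the discreteness in question, and openness alone does not force a nonzero degree (it only forces $y_0\in f(U_i)$, which is the easy direction of the degree--surjectivity relation). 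Even granting nonvanishing, without the sign the additivity formula gives no lower bound on $|d|$. So the sentence ``this step\ldots is both the crux of the proof and the place where the two-dimensional topology enters essentially'' is accurate, but it is also an admission that the proof is not yet there.

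For comparison, the paper avoids degree entirely. It first proves a path-lifting theorem for light open maps (Theorem~\ref{thm:PathLifting}, after Floyd), then shows that a radial segment $\beta$ in $B(f(x_0),r)$ can have only finitely many lifts into a normal domain $U(x_0,f,r)$ lying in a simply connected set (Proposition~\ref{prop:LiftBound}); since every point of $f^{-1}(f(x_0))\cap U$ is the initial point of some lift, finiteness of lifts gives discreteness immediately. The two-dimensional input is an explicit Jordan-curve contradiction: two disjoint lifts of $\beta$, joined near their endpoints, bound a Jordan domain whose image would have to be a precompact open set with boundary in an arc. This replaces your unproven degree facts with a concrete planar argument.
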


\begin{theorem}[Sto\"ilow, 1928]
  \label{thm:discrete_branch}
  Let $\Omega$ be a domain in $\C$ and $f\colon \Omega \to \C$ a continuous, open and discrete map. Then $B_f$ is a discrete set.
\end{theorem}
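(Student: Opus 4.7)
Fix a point $x_0 \in \Omega$ and set $y_0 = f(x_0)$; the goal is to exhibit a neighborhood $U$ of $x_0$ with $B_f \cap U \subset \{x_0\}$. The plan is to build a small normal domain around $x_0$ and then to show that $f$ is topologically conjugate on it to a power map $z \mapsto z^n$; since the branch set of $z \mapsto z^n$ is $\{0\}$, this will finish the proof.

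First I would construct a normal neighborhood $(U, V)$ of $x_0$. By the discreteness of $f$, I can choose $r > 0$ so that $\overline{B(x_0, r)} \subset \Omega$ and $f^{-1}(y_0) \cap \overline{B(x_0, r)} = \{x_0\}$. Setting $D = B(x_0, r)$, the set $f(\partial D)$ is compact and avoids $y_0$, so I may fix a round disk $V$ around $y_0$ with $\overline{V} \cap f(\partial D) = \emptyset$. Let $U$ be the connected component of $f^{-1}(V) \cap D$ containing $x_0$. Standard arguments using connectedness and the disjointness $\overline{V} \cap f(\partial D) = \emptyset$ then give $\overline{U} \subset D$, $f^{-1}(y_0) \cap U = \{x_0\}$, and $f|_U \colon U \to V$ proper.

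The core step is to show that $f|_{\overline U}$ is conjugate to $z \mapsto z^n$ on the closed unit disk $\overline{\bD}$, where $n$ is the topological degree of $f|_U$. Following Sto\"ilow's classical argument, I would first prove that $\partial U$ is a Jordan curve, that $\overline U$ is a closed topological $2$-cell, and that $f|_{\partial U} \colon \partial U \to \partial V$ is an $n$-fold covering of the circle. Then I would lift the $n$ radii from $y_0$ to $n$ chosen preimages on $\partial U$ of a boundary point of $V$; this uses path lifting for proper discrete open maps, a tool already in play in the proof of Theorem~\ref{thm:light_to_discrete}. The $n$ lifted arcs emanating from $x_0$ cut $\overline U$ into $n$ open sectors, each mapped homeomorphically by $f$ onto $\overline V$ minus one radius; these sectorwise homeomorphisms then patch to a global conjugacy of $f|_{\overline U}$ with $z \mapsto z^n$ on $\overline{\bD}$.

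The main obstacle is the boundary analysis. Proving that $\partial U$ is a Jordan curve, and that the radial lifts land on $\partial U$ in the correct cyclic order with no crossings in the interior of $U$, is where the planarity of $\Omega$ is decisive, via the Jordan curve theorem, invariance of domain, and the way proper open discrete maps in $\R^2$ interact with Jordan curves. Once the local power-map model is established, every $x \in U \setminus \{x_0\}$ has an open neighborhood on which $f$ is a local homeomorphism, so $B_f \cap U \subset \{x_0\}$ and $x_0$ is isolated in $B_f$.
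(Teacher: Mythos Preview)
Your plan is a legitimate route, but it is essentially the reverse of what the paper does, and it is considerably heavier. The paper proves Theorem~\ref{thm:discrete_branch} by a short contradiction argument: given a putative branch point $b\in U_0\setminus\{x_0\}$ in a normal neighbourhood $U_0$ of $x_0$, one picks two preimages $z_1\ne z_2$ of some $y_0$ near $f(b)$, lifts an arc from $f(b)$ through $y_0$ to $f(x_0)$ starting at $z_1$ and at $z_2$, and observes that both lifts run from $b$ to $x_0$; the uniqueness of lifts of an arc with prescribed endpoints in a simply connected domain (Proposition~\ref{prop:JCT-Basic}, itself a one-line consequence of the Jordan curve theorem) then forces $z_1=z_2$. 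No boundary analysis, no sector decomposition, and no local normal form are needed. Only after this is in hand does the paper derive the local power-map model (Theorem~\ref{thm:LocalStoilow}) by a clean covering-space argument on the punctured normal neighbourhood, which is now known to carry no branch points.

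Your proposal, by contrast, aims to establish the power-map model first and read off the discreteness of $B_f$ from it. That is the classical Sto\"ilow construction, and it does work, but the ``main obstacle'' you flag is real and is exactly what the paper's ordering sidesteps: showing that $\partial U$ is a Jordan curve, that $f|_{\partial U}$ is an $n$-fold circle cover, and that the lifted radii give a clean sector decomposition all require substantial planar topology, and in particular the step ``each sector is mapped homeomorphically'' is delicate without already knowing there are no branch points inside. In short, your approach is correct in outline but front-loads the hard topology, whereas the paper isolates a minimal Jordan-curve lemma (Proposition~\ref{prop:JCT-Basic}) that dispatches Theorem~\ref{thm:discrete_branch} directly and defers the normal-form statement to an easy corollary.
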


It is interesting to notice that both results stem from path-lifting arguments. Indeed, after path-lifting results are established, standard applications of the Jordan curve theorem yield both Theorems \ref{thm:light_to_discrete} and \ref{thm:discrete_branch}. For continuous, open and discrete maps, we may use Rickman's path-lifting theorem \cite{Rickman-Duke} and for continuous, open and light maps the method of Floyd \cite{Floyd}. Floyd's method suffices for all our purposes and we recall it in Section \ref{sec:path-lifting}; see \cite{Luisto-Characterization} for a more detailed discussion on path-lifting methods.

\bigskip

Having Theorems \ref{thm:light_to_discrete} and \ref{thm:discrete_branch} at our disposal,
it is a straightforward covering-space argument to show that continuous, open and light maps between surfaces are locally modeled by power maps. 
\begin{theorem}[Sto\"ilow, 1928]\label{thm:LocalStoilow}
  Let $f \colon \Sigma \to \Sigma'$ be a continuous, open and light map between surfaces.
  For each $x \in \Sigma$, there exists $k\in \N$, 
  a neighborhood $U$ of $z$, and homeomorphisms 
  $\psi \colon U \to \bD$ and $\phi \colon fU \to \bD$ for which $\psi(x)=0$, $\phi(f(x)) = 0$, and 
  the diagram
  \begin{align*}
    \xymatrix{
    U \ar[r]^{f|_U} \ar[d]_{\psi} & fU \ar[d]^\phi    \\
    \bD \ar[r]^{z\mapsto z^k}     & \bD 
                                    }
  \end{align*}
  commutes.
\end{theorem}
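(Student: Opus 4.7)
The approach is to combine Theorems \ref{thm:light_to_discrete} and \ref{thm:discrete_branch} with the classification of coverings of the punctured disk. Applying both theorems, the map $f$ is discrete and its branch set $B_f \subseteq \Sigma$ is a discrete subset; in particular, for the fixed $z$, both $f^{-1}(f(z))$ and $B_f \setminus \{z\}$ have $z$ as a non-accumulation point. Passing to Euclidean charts at $z$ and $f(z)$, we may assume that $\Sigma$ and $\Sigma'$ are open subsets of $\C$ and that $z = 0 = f(z)$.

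Next I would construct a \emph{normal neighborhood} of $z$. Using the discreteness of $f^{-1}(f(z))$ and of $B_f$, pick a relatively compact open neighborhood $W$ of $z$ with $\overline{W} \cap f^{-1}(f(z)) = \{z\}$ and $\overline{W} \cap B_f \subseteq \{z\}$. Then $f(\partial W)$ is a compact set not containing $f(z)$, so some open disk $V$ about $f(z)$ is disjoint from $f(\partial W)$. Taking $U$ to be the connected component of $f^{-1}(V) \cap W$ containing $z$ produces an open, connected, relatively compact set satisfying $f(\partial U) \subseteq \partial V$ and $\overline{U} \cap f^{-1}(f(z)) = \{z\}$, with $B_f \cap U \subseteq \{z\}$.

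The restriction $f|_{U \setminus \{z\}} \colon U \setminus \{z\} \to V \setminus \{f(z)\}$ is then a local homeomorphism (since $U \setminus \{z\}$ misses $B_f$) and is proper (since $f(\partial U) \subseteq \partial V$ and $\overline{U}$ is compact), hence a finite-sheeted covering map between connected surfaces. Because $\pi_1(V \setminus \{f(z)\}) \cong \Z$, every connected finite covering of $V \setminus \{f(z)\}$ is, up to homeomorphism, modelled by the $k$-th power map on $\bD \setminus \{0\}$ for some $k \in \N$. Fixing a homeomorphism $\phi \colon V \to \bD$ with $\phi(f(z)) = 0$, standard covering theory then produces a homeomorphism $\tilde\psi \colon U \setminus \{z\} \to \bD \setminus \{0\}$ satisfying $\phi \circ f = \tilde\psi^{k}$ on $U \setminus \{z\}$.

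Finally, extend $\tilde\psi$ to $\psi \colon U \to \bD$ by setting $\psi(z) = 0$. Continuity of $\psi$ at $z$ follows from $|\tilde\psi(x)|^{k} = |\phi(f(x))| \to 0$ as $x \to z$, and continuity of $\psi^{-1}$ at $0$ follows from properness, since any limit in $\overline{U}$ of $\psi^{-1}(w_n)$ along a sequence $w_n \to 0$ must lie in $\overline{U} \cap f^{-1}(f(z)) = \{z\}$. I expect the main obstacle to be the construction of the normal neighborhood with the correct boundary behavior and the verification that the restricted map is a genuine covering; once these ingredients are available, the classification of connected coverings of $\bD \setminus \{0\}$ closes the argument mechanically.
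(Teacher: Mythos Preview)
Your proposal is correct and follows essentially the same route as the paper's proof: reduce to planar charts, take a normal neighborhood $U$ of $z$ with $B_f\cap U\subset\{z\}$, observe that $f|_{U\setminus\{z\}}$ is a proper local homeomorphism and hence a finite covering of the punctured disk, and then invoke the classification of connected coverings of $\bD\setminus\{0\}$ to produce the conjugating homeomorphisms. The only cosmetic difference is that the paper first uses the covering to recognize $U$ as a topological disk, chooses homeomorphisms $h_1\colon U\to\bD$ and $h_2\colon fU\to\bD$, and then lifts the resulting self-cover $g$ of $\bD\setminus\{0\}$ through $\zeta_k$, whereas you build $\tilde\psi$ directly from the covering isomorphism and extend it across the puncture; the content is the same.
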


\begin{remark}
  For maps between oriented surfaces, we may alternatively state that there exist orientation-preserving homeomorphisms $\psi \colon U \to \bD$ and $\phi \colon fU \to \bD$ for which the local model for $f$ is $z\mapsto z^k$, if $f$ is orientation-preserving, and $z\mapsto \bar z^k$ if $f$ is orientation-reversing; see the proof of Theorem \ref{thm:LocalStoilow} in Section \ref{sec:last}.
\end{remark}

This local version of Sto\"ilow's discreteness theorem yields the following global factorization theorem. Indeed, by Theorem \ref{thm:LocalStoilow}, a continuous, light and open map $f\colon \Sigma \to S'$ induces a conformal structure on $\Sigma$ making it a Riemann surface $\tilde S$; we refer to
\cite[Lemma A.10]{Bonk-Meyer-book} for a short proof. Thus we may consider $f$ as a holomorphic map $f\colon \tilde S\to S'$. If the surface $\Sigma$ a priori carries a conformal structure, and we consider $\Sigma$ as a Riemann surface $S$, we may take the homeomorphism $h$ in the factorization to be the identity homeomorphism $\Sigma \to \Sigma$. 

\begin{theorem*}[Sto\"ilow's factorization theorem, 1938]
  Let $f \colon S \to S'$ be a continuous, open and light map between Riemann surfaces.
  Then there exists a Riemann surface $\tilde S$ and a homeomorphism $h \colon S \to \tilde S$ such
  that $f \circ h^{-1} \colon \tilde S \to S'$ is a holomorphic map.
\end{theorem*}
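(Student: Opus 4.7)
The plan is to equip the topological surface underlying $S$ with a new conformal structure with respect to which $f$ is holomorphic, call the resulting Riemann surface $\tilde S$, and take $h = \id$; then $f\circ h^{-1}=f$ will be holomorphic by construction. To build the atlas on $\tilde S$, we fix for each $z \in S$ a holomorphic chart $\eta_z \colon V_z \to \bD$ of $S'$ about $f(z)$ with $\eta_z(f(z))=0$, and apply Theorem~\ref{thm:LocalStoilow} to the light, open, continuous map $\eta_z \circ f$ at $z$. This yields a neighborhood $U_z$ of $z$, an integer $k_z \in \N$, and homeomorphisms $\psi_z \colon U_z \to \bD$ and $\phi_z \colon (\eta_z \circ f)(U_z)\to \bD$ satisfying $\phi_z \circ \eta_z \circ f = (w\mapsto w^{k_z}) \circ \psi_z$ on $U_z$.

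The difficulty is that $\phi_z$ need not be holomorphic with respect to the given structure on $S'$, so the target chart $\phi_z\circ\eta_z$ does not belong to the holomorphic atlas of $S'$. We absorb $\phi_z$ into the source chart by constructing a homeomorphism $\mu \colon \bD \to \bD$ with $\mu(w)^{k_z} = \phi_z^{-1}(w^{k_z})$, and setting $\psi_z^* = \mu \circ \psi_z$; the local model of $f$ in the charts $\psi_z^*$ and $\eta_z$ then becomes $w \mapsto w^{k_z}$. The existence of $\mu$ is a covering-space lifting problem against the $k_z$-fold branched cover $w\mapsto w^{k_z}$ on $\bD$: on a small circle about the origin, the loop $w\mapsto \phi_z^{-1}(w^{k_z})$ has winding number $\pm k_z$ about $0$, since $\phi_z^{-1}$ is a homeomorphism of $\bD$ fixing $0$, and hence the winding obstruction in $\Z/k_z$ vanishes; the resulting continuous lift on $\bD\setminus\{0\}$ extends across $0$ by $\mu(0)=0$, with bijectivity inherited from the two factors.

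It remains to verify that $\{(U_z, \psi_z^*)\}_{z\in S}$ is a holomorphic atlas on the underlying topological surface of $S$. On the overlap of two such charts we have $(\psi_2^* \circ (\psi_1^*)^{-1}(w))^{k_2} = T(w^{k_1})$ with $T = \eta_2 \circ \eta_1^{-1}$ holomorphic. Where $T(w^{k_1})$ is nonzero, the transition is a continuous $k_2$-th root of a nonvanishing holomorphic function, hence locally holomorphic; the remaining zeros form a discrete set across which the transition extends holomorphically by continuity and Riemann's removable singularity theorem. The resulting Riemann surface $\tilde S$ together with the identity $h \colon S\to\tilde S$ yields the factorization, as $f\circ h^{-1}$ has local form $w\mapsto w^{k_z}$ between holomorphic charts. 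The main technical step will be the construction of $\mu$; once it is in place, the verification of holomorphic transitions reduces to a short removable-singularity argument.
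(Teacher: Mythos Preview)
Your proposal is correct and follows the same route as the paper: use the local power-map models from Theorem~\ref{thm:LocalStoilow} to pull back the conformal structure of $S'$ to the underlying surface of $S$, obtaining $\tilde S$, and take $h=\id$. The paper simply cites Forster and Bonk--Meyer for the verification that these local models assemble into a holomorphic atlas, whereas you spell out the one genuine technical point---absorbing the non-holomorphic target homeomorphism $\phi_z$ into the source chart via a $k_z$-th root lift $\mu$---and then check the transitions by a removable-singularity argument; this is exactly the content hidden in those references.
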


Recall that an
orientable topological surface carries a conformal structure. Indeed, by a classical theorem of Rad\'o,
every 2-manifold can be triangulated (see e.g.\ \cite[Theorem 8.3, p.\,60]{Moise} or \cite[Section II.8, p.\,105]{AhlforsSario})
and every triangulated orientable surface has a conformal structure
(see e.g.\ \cite[II.2.5E, Theorem, p.\,127]{AhlforsSario} or \cite[Section 2.2, pp.\,9--11]{Bers}).
In this way we recover the interpretation that \emph{a continuous, open and light map between orientable surfaces is a holomorphic map between Riemann surfaces}.

\bigskip

As the authors' interest for these theorems of Sto\"ilow stems from their role in the theory of quasiregular maps, we finish this introduction with a related remark. In the quasiconformal literature Sto\"ilow's theorem typically refers to the result that \emph{each quasiregular map $S\to S'$ between Riemann surfaces factors into a holomorphic map $S\to S'$ and a quasiconformal homeomorphism $S\to S$}. The proof of this result is analytic in its nature and based on the Beltrami equation. We refer to Astala, Iwaniec, and Martin \cite[Theorem 5.5.1]{Astala-Iwaniec-Martin-book} for a detailed discussion.

\bigskip \textbf{Acknowledgments.}
The authors would like to thank Mario Bonk for his encouragement
to write this expository article and for several helpful remarks. 
We are also grateful to the anonymous referee for many comments which considerably improved the exposition.

\section{Preliminaries}
\label{sec:Preli}

In the complex plane $\C$
we denote the unit ball as $\bD$ and the unit circle 
as $\bS^1$. For a path $c \colon [0,1] \to \C$ we set
$|c|$ to be the image set of $c$. An injective path
$\beta \colon [0,1] \to \C$ is called an \emph{arc}.
For a set $A \subset \C$ and a radius $r> 0$
we denote by $B(A,r)$ the set of points whose distance from $A$ is strictly
less than $r$.

A set is called \emph{totally disconnected} if its connected components are points.
A totally disconnected closed set in the plane has topological dimension 
zero. We recall this fact as the following lemma; see \cite[Section II.2, p.\,14]{Hurewicz-Wallman} for a proof.
\begin{lemma}\label{lemma:Topological}
  Let $C$ be a closed and totally disconnected set in $\C$.
  Then every point $x \in C$ has a neighborhood basis consisting
  of neighborhoods $U$ satisfying $C \cap \partial U = \emptyset$.
\end{lemma}

Let $\Omega$ be a planar domain and $f \colon \Omega \to \C$ be a 
continuous, open and light map. A domain $U \subset \Omega$ is a \emph{normal domain (for $f$)} if $U$ 
is compactly contained in $\Omega$ and $\partial f U = f \partial U$.
For each $x\in \Omega$ and $r>0$,
we denote by $U(x,f,r)$ the component of $f \inv B(f(x),r)$ containing the point $x$.
The following lemma shows that domains $U(x,f,r)$ are normal domains
of $x$ for all $r>0$ small enough.
The proof here is essentially identical to the proof in the case when $f$ is discrete;
see e.g.\ \cite[Lemma 5.1]{Vaisala} where the discreteness is used essentially only to guarantee
that Lemma \ref{lemma:Topological} holds.

\begin{lemma}
  \label{lemma:NormalNhoodExists}
  Let $\Omega$ be a planar domain, $f \colon \Omega \to \C$
  a continuous, open and light map, and let $x \in \Omega$. 
  Then there exists a radius $r_x > 0$
  such that for all $r \leq r_x$ the domain $U(x,f,r)$
  is a normal domain containing $x$
  and $f U(x,f,r) = B(f(x),r)$.
\end{lemma}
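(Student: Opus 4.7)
The plan is to enclose $x$ in a small precompact ``cage'' $V\subset\Omega$ whose boundary misses the fibre $f^{-1}(f(x))$, and then choose $r_x>0$ so small that the compact set $f\partial V$ is disjoint from $B(f(x),r_x)$. Every component $U(x,f,r)$ with $r\le r_x$ will then be trapped inside $V$, and the three required properties---being a precompact neighbourhood of $x$, surjecting onto $B(f(x),r)$, and satisfying $\partial fU=f\partial U$---will all follow from a single compactness argument. To build the cage, lightness of $f$ makes $C\coloneqq f^{-1}(f(x))$ a closed totally disconnected subset of $\Omega$, so Fact~\ref{fact:Topological} furnishes arbitrarily small open neighbourhoods $V$ of $x$ with $\partial V\cap C=\emptyset$; intersecting with a small Euclidean ball we may additionally arrange $\overline V\Subset\Omega$. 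The compact set $f\partial V$ then avoids $f(x)$, yielding the desired $r_x>0$.

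Fix $r\le r_x$ and let $U=U(x,f,r)$. Because $\partial V$ is disjoint from $f^{-1}B(f(x),r)$, the connected set $U$, which meets $V$ at $x$, must lie entirely in $V$; in particular $\overline U$ is compact and contained in $\Omega$. The image $fU\subset B(f(x),r)$ is open by openness of $f$. For closedness in the ball, suppose $f(y_n)\to w\in B(f(x),r)$ with $y_n\in U$, and pass to a subsequential limit $y^*\in\overline U\subset\Omega$. The pivotal observation is the inclusion $\partial U\subset f^{-1}\partial B(f(x),r)$: a point of $\overline U\setminus U$ cannot itself lie in the open set $f^{-1}B(f(x),r)$ (otherwise, by maximality, its component of $f^{-1}B(f(x),r)$ would coincide with $U$), and hence by continuity of $f$ it lands in $f^{-1}\partial B(f(x),r)$. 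This forces $y^*\in U$ and $w\in fU$, so connectedness of $B(f(x),r)$ gives $fU=B(f(x),r)$. The same sequential argument, applied to $w\in\partial B(f(x),r)$, produces a point $y^*\in\partial U$ with $f(y^*)=w$, establishing $\partial B(f(x),r)\subset f\partial U$; the reverse inclusion is just $\partial U\subset f^{-1}\partial B(f(x),r)$ again, proving normality $\partial fU=f\partial U$.

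The main obstacle is identifying and establishing the inclusion $\partial U\subset f^{-1}\partial B(f(x),r)$ for the component $U$: it combines maximality of connected components of the open set $f^{-1}B(f(x),r)$ with continuity of $f$, and once in place both the surjectivity $fU=B(f(x),r)$ and the normality condition drop out of the precompactness of $\overline U$ via identical sequential arguments. Everything upstream of this---the existence of the cage $V$, the choice of $r_x$, and the containment $U\subset V$---is routine once Fact~\ref{fact:Topological} is invoked.
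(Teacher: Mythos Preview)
Your proof is correct and follows essentially the same route as the paper: construct a precompact cage $V$ via Fact~\ref{fact:Topological}, choose $r_x$ so that $f\partial V$ misses $B(f(x),r_x)$, trap $U$ inside $V$ by connectedness, and then use the key observation $\overline{U}\cap f^{-1}B(f(x),r)=U$ (your inclusion $\partial U\subset f^{-1}\partial B(f(x),r)$ is the contrapositive) to deduce both surjectivity and normality. The paper argues this observation via an auxiliary component $W$ while you invoke maximality of components directly and package the conclusion sequentially, but the substance is identical; your ordering (surjectivity first, then $\partial B\subset f\partial U$ by a second limit argument) is a harmless reorganisation. One cosmetic point: rather than ``intersecting with a small Euclidean ball'', simply choose $V$ from the neighbourhood basis of Fact~\ref{fact:Topological} already small enough that $\overline V\subset\Omega$---intersecting afterward could in principle reintroduce fibre points on the new boundary.
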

\begin{proof}
  Let $x \in \Omega$. By Lemma \ref{lemma:Topological} there
  exists a precompact 
  neighborhood $V$ of $x$ for which $\partial V \cap f \inv \{ f(x) \} = \emptyset$.
  Since $\partial V$ is compact
  and does not contain pre-images of $x$, we may fix
  $r_x >0$ for which $B(f(x),r_x)\cap f \partial V = \emptyset$.
  
  Let $0 < r < r_x$ and set $U \colonequals U(x,f,r)$.
  We show first that $U$ is a precompact domain contained in $V$. Since 
  \begin{align*}
    f(U \cap \partial V) 
    \subset fU \cap f \partial V
    \subset B(f(x),r) \cap f \partial V
    = \emptyset,
  \end{align*}
  we have $U \cap \partial V = \emptyset$. Since $U$ is a connected neighborhood of $x$ by definition, we therefore have that $U \subset V$. Thus
  $\overline{U}$ is compact as a closed subset
  of the compact set $\overline{V}$.

  We claim next that
  $\partial f U = f \partial U$.
  To show the inclusion $\partial fU \subset f \partial U$, we note first that, by precompactness
  of $U$ and continuity of $f$, we have that $\overline{fU} = f\overline{U}$. On the other hand,
  since $f$ is open and $U$ is a domain, both $U$ and $fU$ are open sets.
  Thus
  \begin{align*}
    \partial fU
    = \overline{fU} \setminus fU
    = (f\overline{U}) \setminus fU
    \subset f(\overline{U} \setminus U)
    = f \partial U.
  \end{align*}
  This proves the first inclusion.

  We show now the second inclusion $f \partial U \subset \partial f U$.
  Since $f$ is continuous, we have $f\partial U \subset f \overline{U} \subset \overline{fU}$
  and so it suffices to show that $fU \cap f\partial U = \emptyset$.
  Suppose towards contradiction that $fU \cap f\partial U \ne \emptyset$ and let $z \in fU \cap f \partial U$.
  Since $f$ is an open map and $U$ is a domain there exists a radius $s>0$ such that $B(z,s) \subset fU$.
  
  Fix $w \in (\partial U) \cap f \inv \{ z \}$. Since $f$ is continuous,
  $f \inv B(x,r)$ is open and thus intersects $U$ as $w \in (f\inv B(x,r)) \cap \partial U$.
  In particular, the $w$-component of $f \inv B(x,r)$, denoted $W$, intersects $U$.
  Now, as the open set $U$ is the $x$-component of
  $f \inv B(x,r) \supset f \inv B(z,s)$, it must in fact contain $W$, since the components
  of nested sets are also nested.
  This implies that $w \notin \partial U$, which is a contradiction.
  Thus $f \partial U \subset \partial f U$ and we conclude
  that $f \partial U = \partial f U$. Thus $U$ is a normal domain.

  Finally, we wish to show that $fU = B(f(x),r)$.
  To this end we first note that since $f$ is an open map and
  $U$ a domain, $fU$ is an open set in $B(f(x),r)$.
  Next we show that $fU$ must also be closed in $B(f(x),r)$.
  Suppose towards contradiction that there exists
  a point $z \in B(f(x),r) \cap \partial fU$. Since $\partial fU = f \partial U$,
  we may fix a point $w \in \partial U$ with $f(w) = z$.
  Now for $s < r - d(f(x),z)$ we see that the $w$-component $V$
  of $f \inv B(z,s)$ is a neighborhood of $w$ that intersects
  $U$. Since
  \begin{align*}
    V \subset
    f \inv B(z,s)
    \subset f\inv B(x,r)
  \end{align*}
  and both $U$ and $V$ are connected we must have by the definition of
  $U$ that $V \subset U$. But this is a contradiction since $w \in \partial U$,
  but $V$ is now a neighborhood of $w$ in $U$.
  Thus we conclude that $fU$ is both open and closed in $B(f(x),r)$, and so in particular
  $fU = B(f(x),r)$.
\end{proof}

We show next that, when restricted to a normal domain, a continuous, open and light map
is a proper map and furthermore the components of the pre-image 
of a compact set map surjectively to that compact set. For
continuous, open and discrete maps this claim is straightforward to 
prove, but for continuous, open and light maps the proof is more involved.
The proofs of the following two lemmas are a special case of a more general result of Whyburn; see
\cite[Theorem 7.4, p.\,147 and Theorem 7.5, p.\,148]{Whyburn-AT}.
\begin{lemma}\label{lemma:NormalProper}
  Let $U$ be a normal domain of a continuous, open and light
  map $f \colon \Omega \to \C$. Then the restriction
  $f|_U \colon U \to fU$ is a proper map.
\end{lemma}
\begin{proof}
  Let $K \subset fU$ be a compact set. Suppose towards contradiction that
  $ U \cap f \inv K$ is not compact. Then there
  exists a sequence $(x_j)$ in $U \cap f \inv K$ that has no
  subsequence converging in $U \cap f \inv K$.
  Since $\overline U$ is compact, the sequence $(x_j)$ does have a subsequence $(y_j)$ converging to a point $y_0 \in \overline U$. 
  Since $(f \inv K) \cap U$ is closed in $U$, 
  we conclude
  that $y_0 \in \partial U$. Now by the continuity of $f$ and closedness
  of $K$, we have that $f(y_0) \in K$. Since $y_0 \in \partial U$ and $U$ is a normal
  domain, this implies that $K \cap \partial fU \neq \emptyset$, which
  is a contradiction, since $K$ is a compact subset of the domain $fU$.
  We conclude that $f|_U$ is a proper map. 
\end{proof}

\begin{lemma}\label{lemma:NormalSurjectivity}
  Let $U$ be a normal domain of a continuous, open and light
  map $f \colon \Omega \to \C$. Then
  for any compact connected set $K \subset fU$,
  and each component $E$ of $(f|_U)\inv K$, the restriction 
  $f|_E \colon E \to K$ is surjective.
\end{lemma}
\begin{proof}
  Let $K \subset fU$ be a compact and connected set,
  and let $\tilde E = (f|_U)\inv K$. 
  We note that, since $f$ is open, 
  also the restriction $f|_{\tilde E} \colon \tilde E \to K$ is open.

  Suppose towards contradiction that there exists a component 
  $E \subset \tilde E$ and a point $p \in K$ for which $p \notin fE$. 
  Since $f|_U \colon U \to fU$
  is a proper map, the pre-image $P = U \cap f\inv \{ p \}$ is a compact set 
  which does not intersect $E$. 
  
  Since $\tilde E$ is a compact subset of the plane, it is especially
  a compact Hausdorff space. Thus the components of $\tilde E$ equal the
  quasicomponents of $\tilde E$, see e.g.\ \cite[Theorem 6.1.23]{Engelking};
  recall that the quasicomponent of a point is the intersection of all
  open and closed sets containing that point.
  Thus we may choose, for every $x \in P$, a separation
  $\{A_x,B_x\}$ of the set $\tilde E$, i.e.,\ two disjoint open sets 
  $A_x, B_x \subset U$
  covering $\tilde E$ for which $E \subset A_x$ and $x \in B_x$. 
  Since $P$ is compact, the cover $\{ B_x \colon x\in P\}$ has a finite subcover
  $\{ B_{x_1},\ldots, B_{x_n}\}$.

  For each $j=1,\ldots, n$, the open set $A_{x_j}$ contains the set $E$ and 
  satisfies $A_{x_j} \cap B_{x_j} = \emptyset$. Thus the intersection
  \begin{align*}
    A = \bigcap_{j=1}^n A_{x_j}
  \end{align*}
  is a neighborhood of $E$ satisfying
  $A \cap P = \emptyset$ and
  $E \cap \partial A = \emptyset$.
  Now $f A$ is an open
  set in $fU$ that contains $fE$ but does not contain $p$.
  Since $K$ is a connected set that intersects both $fA$ and
  its complement $\complement fA \ni p$,
  we have $(\partial fA ) \cap K \neq \emptyset$.

  Finally, on the one hand, by the openness of $f$ we have $\partial fA \subset f \partial A$.
  On the other hand, due to the
  definition of the sets $A_j$ and thus of $A$,
  $(\partial A) \cap f \inv K = \emptyset$. This is a contradiction and so
  the original claim holds.
\end{proof}

\section{Path lifting after Floyd and Sto\"ilow}
\label{sec:path-lifting}

In this section we discuss path-lifting, which
is one of the main tools in the forthcoming proofs.
The following theorem is, essentially, due to Sto\"ilow
\cite[pp.\,354--358]{Stoilow} and its idea was generalized by Floyd \cite{Floyd} to the setting
of compact metric spaces. We include here a version of Floyd's proof in the planar setting for the
sake of completeness of the exposition; note that the proof would go through
also in the setting of locally compact metric spaces.
\begin{theorem}\label{thm:PathLifting}
  Let $\Omega \subset \C$ be a planar domain and
  $f \colon \Omega \to \C$ a continuous, open and light map.
  Let $U$ be a normal domain in $\Omega$. Then for
  any path $\beta \colon [0,1] \to fU$ and any point 
  $x_0 \in U \cap f \inv \{ \beta(0) \}$ there exists a path 
  $\alpha \colon [0,1] \to U$ satisfying
  $\alpha(0) = x_0$ and $f \circ \alpha = \beta$.
\end{theorem}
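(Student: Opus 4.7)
My plan is to follow Floyd's fibre-product approach, which recasts the path-lifting as a question about components of a compact space. Form the set
\[
Z \;=\; \bigl\{(x,t)\in \bar U\times [0,1] : f(x)=\beta(t)\bigr\}
\]
with coordinate projections $\pi_1,\pi_2$. Because $\beta$ takes values in the open set $fU$ and the normal-domain identity gives $\partial fU=f\partial U$, no preimage of a $\beta(t)$ lies on $\partial U$, so $Z$ is a compact subset of $U\times[0,1]$. The projection $\pi_2$ is continuous and onto $[0,1]$, and by lightness of $f$ each fibre $\pi_2^{-1}(t)\cong f^{-1}(\beta(t))\cap U$ is compact and totally disconnected. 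A lift of $\beta$ starting at $x_0$ will then be the first coordinate of a continuous section $\sigma\colon[0,1]\to Z$ with $\sigma(0)=(x_0,0)$.

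The core claim is that the connected component $Z_0\subset Z$ of $(x_0,0)$ satisfies $\pi_2(Z_0)=[0,1]$. Since $Z_0$ is a continuum, $\pi_2(Z_0)=[0,a]$ for some $a$, and I argue by contradiction that $a=1$. Suppose $a<1$. In the compact Hausdorff space $Z$ components coincide with quasi-components, so a standard compactness argument produces a clopen set $V\subset Z$ with $Z_0\subset V$ and $V\cap\pi_2^{-1}([a+\delta,1])=\emptyset$ for some $\delta>0$. Choose $(x^*,a)\in Z_0$. Combining Fact \ref{fact:Topological} applied to the totally disconnected set $f^{-1}(\beta(a))\cap\bar U$ with Lemma \ref{lemma:NormalNhoodExists}, select a precompact normal neighbourhood $N$ of $x^*$ with $\bar N\subset U$, $fN=B(\beta(a),r)$, and $\beta([a-\eta,a+\eta])\subset fN$ for some $\eta\in(0,\delta)$, arranged so that $\partial N$ avoids $f^{-1}(\beta(t))$ for every $t\in[a-\eta,a+\eta]$. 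Inside the compact set $Y=f^{-1}(\beta([a-\eta,a+\eta]))\cap\bar N$, the component $Y^*$ of $x^*$ is shown to map via $f$ onto $\beta([a-\eta,a+\eta])$; coupled with the parameter on the arc, $Y^*$ sits as a connected subset of $Z$ attached at $(x^*,a)\in Z_0$, so $Y^*\subset Z_0$, and its $\pi_2$-image then reaches $a+\eta$, contradicting $\pi_2(Z_0)=[0,a]$.

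Once $\pi_2(Z_0)=[0,1]$ is in hand, a continuous section $\sigma\colon[0,1]\to Z_0$ is extracted by inductive selection on dyadic partitions: at each scale the totally disconnected fibres are separated by clopen subsets of $Z_0$ and a compatible branch is chosen, and the resulting nested clopen tubes shrink to the graph of a continuous section. The required lift is $\alpha=\pi_1\circ\sigma$. The main obstacle is the local surjectivity statement of the previous paragraph — that the continuum $Y^*$ projects onto the entire arc $\beta([a-\eta,a+\eta])$. This is where the combination of properness of $f|_{\bar N}$ (preventing continua in $\bar N$ from escaping through $\partial N$, which has been arranged to miss the relevant preimages), openness of $f$ (supplying preimages of all nearby $\beta(t)$ inside $N$), and total disconnectedness of the fibres (preventing $Y^*$ from stalling at a single level) must work together in the spirit of Sto\"ilow's original monotone-light argument.
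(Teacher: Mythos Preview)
Your fibre-product reformulation is a legitimate packaging, close to Floyd's original viewpoint, but the plan has two genuine gaps.

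In Step~2, the phrase ``coupled with the parameter on the arc, $Y^*$ sits as a connected subset of $Z$'' presumes $\beta|_{[a-\eta,a+\eta]}$ is injective. The theorem concerns arbitrary paths, so $\beta$ may backtrack or stall; then a point $x\in Y^*$ corresponds to several parameters $t$, and the set $\{(x,t): x\in Y^*,\ f(x)=\beta(t),\ t\in[a-\eta,a+\eta]\}$ need not be connected. A cleaner route avoids $N$ and $Y^*$ altogether: observe that $\pi_2\colon Z\to[0,1]$ is \emph{open} because $f$ is, so any clopen $V\supset Z_0$ has $\pi_2(V)$ clopen in $[0,1]$, hence equal to $[0,1]$, contradicting the separation you set up and giving $\pi_2(Z_0)=[0,1]$ directly.

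More seriously, Step~3 is where the real work lies, and your description is not a mechanism. The continuum $Z_0$ has no nontrivial clopen subsets, so ``clopen subsets of $Z_0$'' cannot separate anything; nor are the components of $\pi_2^{-1}(\sigma)\cap Z_0$ over a subinterval $\sigma$ clopen in general, since there may be infinitely many. To actually produce the section one needs a diameter estimate of the type in Lemma~\ref{lemma:FloydDiameter} for the light map $\pi_2$, then an inductive choice of a \emph{chain} of small components over successive subintervals, followed by a Hausdorff-limit and nested-intersection argument to pin down $\alpha(t)$ --- and this is exactly the construction the paper carries out directly in $U$ via the continua $C(\sigma)$, $C^{n,m}(\sigma)$, $C^n(\sigma)$. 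The fibre-product framing has not bypassed the main difficulty, only relocated it, and your plan leaves that step unaddressed. Your identification of the ``main obstacle'' as the local surjectivity $f(Y^*)=\beta([a-\eta,a+\eta])$ is likewise off target: that is a standard fact for proper open maps (the paper invokes it without comment); the genuine obstacle is the continuous selection.
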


We formulate a few lemmas for the proof of Theorem \ref{thm:PathLifting}.
For the statement of these auxiliary results we set
$\mathscr{C}(\Omega)$ to be the collection of all non-empty compact subsets
of $\Omega \subset \C$. This space is given the 
topology induced by the  \emph{Hausdorff distance $d_H$ of compact sets}, that
is, the distance $d_H(A,A')$ of compact sets $A,A'\subset \C$ is 
\begin{align*}
  d_H(A,A') = \inf
  \left\{
  \eps > 0 \mid 
  A \subset B(A',\eps),
  A' \subset B(A,\eps)
  \right\}.
\end{align*}

We show first that a continuous map $f\colon \Omega \to \C$ induces a continuous map $f_* \colon \mathscr{A} \mapsto \mathscr{fA}$.
For the properties of the Hausdorff distance and convergence, we refer to \cite[pp.\,70--77]{BridsonHaefliger} and
merely comment on the continuity of the induced map $f_*$.
\begin{lemma}\label{lemma:InducedMapIsContinuous}
  For a continuous mapping $f \colon \Omega \to \C$
  the induced map
  \begin{align*}
    f_* \colon \mathscr{C}(\Omega) \to \mathscr{C}(\C),
    \quad
    A\mapsto fA,
  \end{align*}
  is continuous in the topology induced by the Hausdorff distance.
\end{lemma}
\begin{proof}
  Let $A\subset \Omega$ be a compact set, denote $\dist(A,\partial \Omega) \equalscolon s_0$ and set $\varepsilon>0$. 
  Since $\Omega$ is locally compact, the continuous map $f$ is locally uniformly
  continuous and so there exists $\delta \in (0,\frac{s_0}{4})$ such that $d(f(x),f(y)) < \varepsilon$ for all
  $x,y \in B(A,\frac{s_0}{2})$ with
  $d(x,y) < \delta$. In particular, $fB(A,\delta) \subset B(fA, \varepsilon)$. 

  Let $A'\subset \Omega$ be a compact set for which $d_H(A,A') <\delta$.
  We show that $d_H(fA,fA') < \varepsilon$.
  Note first that since $fB(A,\delta) \subset B(fA, \varepsilon)$ and
  $A' \subset B(A,\delta)$, we have $fA' \subset B(fA,\varepsilon)$.
  On the other hand, for any $y_0 \in fA$ we can fix $x_0 \in A$ such that $f(x_0) = y_0$.
  Since $d_H(A,A') < \delta$ there now exists a point $z_0 \in A'$ with $d(z_0,x_0) < \delta$,
  and so by the aforementioned uniform continuity $d(f(z_0),y_0) < \varepsilon$, so in particular 
  $fA \subset B(fA',\varepsilon)$.
  Thus $d_H(fA,fA') < \varepsilon$ and the induced map $f_{*}$ is continuous.
\end{proof}

The following lemma is contained e.g.\ in \cite[pp.\,131,\,148]{Whyburn-AT},
but we include a modern proof for the reader's convenience.
\begin{lemma}\label{lemma:FloydDiameter}
  Let $\Omega \subset \C$ be a planar domain and 
  $f \colon \Omega \to \C$ a continuous, open and light map.
  Suppose $U$ is a normal domain in $\Omega$. Then for
  any $\eps > 0$ there exists a constant $\delta > 0$ having the property that,
  for any continuum 
  $C' \subset fU$ satisfying $\diam(C') < \delta$, the components of 
  $U \cap f \inv C'$ have diameter strictly less than $\eps$.
\end{lemma}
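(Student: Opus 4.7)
The plan is to argue by contradiction, combining the Blaschke selection theorem with the lightness of $f$. Suppose the conclusion fails. Then there exist $\eps>0$, a sequence of continua $C_n\subset fU$ with $\diam(C_n)\to 0$, and, for each $n$, a component $K_n$ of $U\cap f\inv C_n$ satisfying $\diam(K_n)\geq \eps$.

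Because $U$ is compactly contained in $\Omega$, each closure $\overline{K_n}$ lies in the compact set $\overline{U}\subset\Omega$. I would invoke Blaschke's selection theorem to pass to a subsequence along which $\overline{K_n}\to K$ in the Hausdorff metric for some nonempty compact $K\subset\overline{U}$. Since the Hausdorff limit of a sequence of continua in a compact metric space is itself a continuum (a standard continuum-theoretic fact), $K$ is connected; and since the diameter functional is continuous in the Hausdorff metric, $\diam(K)\geq \eps$.

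In parallel, as $\diam(C_n)\to 0$, a further subsequence of $(C_n)$ Hausdorff-converges to a singleton $\{y\}\subset\C$. For any $x\in K$ there are $x_n\in\overline{K_n}$ with $x_n\to x$; continuity of $f$ on $\overline{U}$ together with the inclusion $f(\overline{K_n})\subset C_n$ (which uses that $C_n$ is closed) yields $f(x)=\lim f(x_n)=y$. Thus $K\subset f\inv\{y\}$ is a connected set of diameter at least $\eps$, contradicting the lightness of $f$.

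The only nontrivial ingredient in this plan is the fact that a Hausdorff limit of continua in a compact metric space is itself connected; I expect this to be the main conceptual point to cite, rather than prove from scratch. Everything else is a routine compactness-and-continuity argument, and in the planar setting the required continuum theory is classical (cf.\ the Whyburn references already attached to this lemma).
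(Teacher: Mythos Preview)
Your proof is correct and follows essentially the same contradiction argument as the paper: extract Hausdorff-convergent subsequences of the small continua $C_n$ and of the large preimage components $K_n$, and observe that the limit of the latter is a nondegenerate continuum mapped by $f$ to a single point, violating lightness. You spell out the supporting facts (Blaschke selection, continuity of diameter, connectedness of Hausdorff limits of continua) more explicitly than the paper does, but the structure is identical.
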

\begin{proof}
  Suppose there exists $\eps_0 > 0$ having the property that,
  for each $n \in \N$, there exists a continuum $C_n' \subset fU$ having diameter at most $1/n$ and a component $C_n$ of
  $U \cap f \inv C_n'$ having diameter at least $\eps_0$. Note that by Lemma \ref{lemma:NormalSurjectivity}
  $f(C_n) = C_n'$.

  Since both $\overline U$ and $f \overline U$ are compact,
  we may, after passing to subsequences if necessary, 
  assume that the sequences $(C_n')$ and $(C_n)$
  converge in the Hausdorff metric to a point $z \in f\overline{U}$ and to a continuum
  $C \subset \overline{U}$ with $\diam(C) \geq \eps_0$,
  respectively. Then, by the continuity of $f_*$, i.e.\ Lemma \ref{lemma:InducedMapIsContinuous},
  $f C = \{z\}$. Since $f$ is light, this is a contradiction. 
  Thus the claim is true.
\end{proof}

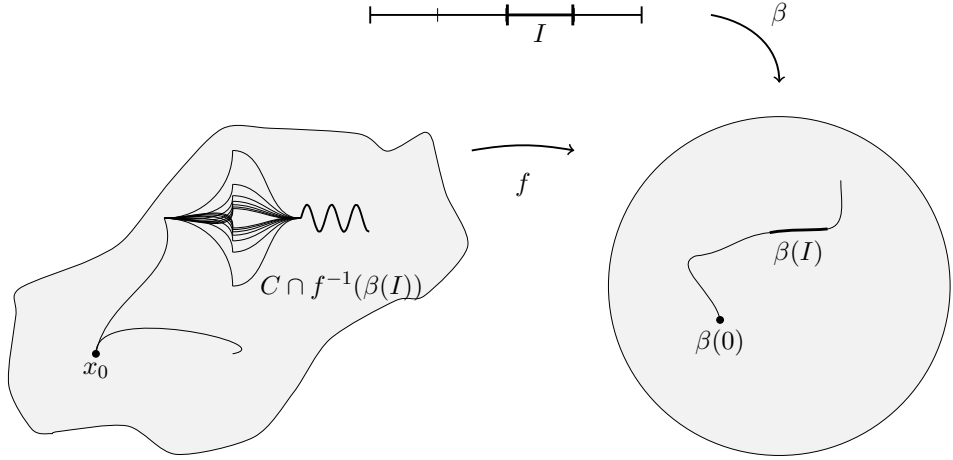
\begin{figure}
  \centering
  
  \resizebox{\textwidth}{!}{
    \begin{tikzpicture}

      \begin{scope}[scale=1.0]

        \begin{scope}[shift={(-4,0)}]

          \begin{scope}[scale=1.8,rotate=30]
            \draw [fill=black!5,smooth,domain=0:360] plot 
            (
            {2*cos( \x )+0.1*sin(34*\x)}, 
            {sin( \x ) + 0.2*cos(5* \x )}
            );
          \end{scope}
          
          \draw [fill] (-2,-1) circle [radius = 0.05];
          \node [below] at (-2,-1) {$x_0$};

          \draw (-2,-1) to [out=92,in=10] (0,-1);
          \draw (-2,-1) to [out=80,in=290] (-1,1);

          \foreach \i in {1,...,7} {
            \draw (-1,1) to [out=0, in=270+\i] (0,{1+1.0/\i});
            \draw (0,{1+1.0/\i}) to [out=0, in=180+\i] (1,1);

            \draw (-1,1) to [out=0, in=90-\i] (0,{1-1.0/\i});
            \draw (0,{1-1.0/\i}) to [out=0, in=180-\i] (1,1);
          }

          \begin{scope}[shift={(1,1)}]
            \draw[thick,domain=0:1,smooth] plot
            (
            {\x},
            {0.2*sin(1000*\x)}
            );
          \end{scope}
          
          \draw (1.6,-0.0) node { $C \cap f \inv ( \beta(I))$};

        \end{scope}
        
        \begin{scope}[shift={(4,0)}]
          \draw[fill=black!5] (0,0) circle [radius = 2.5];

          \coordinate (beta) at (210:1);
          \draw [fill] (beta) circle [radius = 0.05];
          \node[below] at (beta) {$\beta(0)$};
          
          \draw 
          (210:1) to [out=100,in=200]  
          (160:1.3) to [out=5,in=185]  
          (100:0.8) to [out=10,in=185]  
          (50:1.1) to [out=10,in=270]  
          (60:1.8);

          \draw [-,very thick] (100:0.8) to [out=10,in=185]  
          (50:1.1) ;
          \node[below] at ($(100:0.8)!0.5!(50:1.1)$) {$\beta(I)$};
        \end{scope}

        \begin{scope}[shift={(0,4)}]
          \draw [|-|, thick] (-2,0) -- (2,0);
          \draw [|-|, very thick] (0,0) -- (1,0);
          \foreach \i in {-1,0,1}
          {
            \draw (\i,0.1)--(\i,-0.1);
          }

          \node [below] at  (0.5,0)  { $I$ };
        \end{scope}
        
        \begin{scope}
          \draw[->, thick] (-0.5,2) to [out=10,in=170] (1,2);
          \draw (0.25,1.5) node {$f$};
          
          \draw[->, thick] (3,4) to [out=-10,in=90] (4,3);
          \draw (4,4) node {$\beta$};
        \end{scope}
        
      \end{scope}
      
    \end{tikzpicture}
  }
  \caption{An example of an $(f,\beta,2)$-prelift of a path.}
  \label{fig:PathLifting}
\end{figure}

With these preliminary results we can now turn to the proof of Theorem \ref{thm:PathLifting}.
To improve the clarity of the exposition we define some further auxiliary concepts.
The crucial idea from Floyd is, in the setting of Theorem \ref{thm:PathLifting},
to define for any subdivison $0 = t_0 < \ldots < t_k = 1$ of the unit interval a
continuum $C \subset U$ such that $fC = |\beta|$ and $C \cap f \inv ( \beta([t_j,t_{j+1}]))$ is a component
of $f \inv ( \beta([t_j,t_{j+1}]))$ for each $j = 0, \ldots, k-1$.

For the purpose
of lifting paths, the dyadic subdivisions turn out to be natural.
We formalize Floyd's idea by introducing the concept of a prelift.
\begin{definition}
  For a continuous, open and light mapping $f \colon \Omega \to \C$, a normal domain $U$ of $f$
  and a path $\beta \colon [a,b] \to fU$ we call a continuum $C = C(f,\beta,n) \subset U$
  an \emph{$(f,\beta,n)$-prelift of $\beta$ in $U$} if
  \begin{enumerate}
  \item for each $k = 1, \ldots 2^{n}$, $C \cap f \inv ( \beta [(k-1) 2^{-n}, k 2^{-n}] )$
    is a component of $f \inv (\beta [(k-1) 2^{-n}, k 2^{-n}])$, and
  \item for each $k = 0, \ldots 2^{n}$, $C \cap f \inv ( \beta(k 2^{-n}) ) \neq \emptyset$.
  \end{enumerate}
  We say that a prelift $C$ \emph{starts from $x_0 \in f \inv ( \beta(a) )$} if $x_0 \in C$.

  The \emph{restriction of a prelift} $C(f,\beta,n)$ to an interval $[c,d] \subset [a,b]$
  is just a subcontinuum of $C$ that is also an $(f,\beta|_{[c,d]},n)$-prelift. Note
  that an $(f,\beta,n)$ prelift is also a $(f,\beta,m)$-prelift for all $m \leq n$.
  
  Finally we denote by $w(C)$ the \emph{mesh} of a prelift $C = C(f,\beta,n)$, that is
  \begin{align*}
    w(C)
    \colonequals \max_{k = 1, \ldots, 2^{n}} \diam\left(C \cap f \inv (\beta [(k-1) 2^{-n}, k 2^{-n}])\right).
  \end{align*}
  Note that as an immediate corollary of Lemma \ref{lemma:FloydDiameter}
  and the uniform continuity of $\beta$ we see that
  $w(C(f,\beta,n)) \to 0$ as $n \to \infty$.
\end{definition}
A posteriori, for planar continuous, open and light mappings the definition of a prelift turns out
to be overly verbose; the prelifts for continuous, open
and discrete mappings are merely boundedly finite unions of path segments by
Rickman's path-lifting theorem in \cite{Rickman-Duke}.

\begin{lemma}\label{lemma:PreliftsExist}
  In the setting of Theorem \ref{thm:PathLifting}, for any $n \in \N$ and any given
  $x_0 \in U \cap f \inv \{ \beta(0) \}$, there exists an $(f,\beta,n)$-prelift of
  $\beta$ in $U$ starting from $x_0$.
\end{lemma}
\begin{proof}
  Fix $n \in \N$ and $x_0 \in U \cap f \inv \{ \beta(a) \}$.
  We set $C_0$ to be the component of $U \cap f \inv (\beta [0, 2^{-n}])$ containing $x_0$.
  By Lemma \ref{lemma:NormalSurjectivity} this component $C_0$ maps surjectively
  onto $\beta([0,2^{-n}])$.

  Suppose $C_{k-1}$ has been defined for some $k < 2^n$.
  By Lemma \ref{lemma:NormalSurjectivity}
  $C_{k-1}$ is mapped surjectively onto the set $\beta [ (k-1) 2^{-n}, k2^{-n}]$, which
  intersects $\beta [k 2^{-n}, (k+1) 2^{-n}]$ by the continuity of $\beta$.
  In particular $C_{k-1}$ contains a pre-image of the point $f \inv \{ \beta(k 2^{-n}) \}$,
  and so we may fix a component $C_k$ of
  $f \inv (\beta [ k 2^{-n}, (k+1)2^{-n}])$ such that $C_k$ and $C_{k-1}$ intersect.
  Again by
  Lemma \ref{lemma:NormalSurjectivity} this component $C_k$ maps surjectively
  onto $\beta([k 2^{-n}, (k+1) 2^{-n}])$.
  
  We now set $C \colonequals \cup_{j = 0}^{2^n - 1} C_j$ and note immediately that $C$
  is the required $(f,\beta,n)$-prelift of $\beta$.
\end{proof}

We are now ready to prove Theorem \ref{thm:PathLifting}. The idea of the proof relies
on taking a sequence of prelifts given by Lemma \ref{lemma:PreliftsExist} and
then constructing a subsequence of these prelifts that is, in a sense, converging on each
subinterval of $[0,1]$. Curious readers might be interested in
comparing this to the properties of ultralimits where one can, in a compact metric space,
choose a converging subsequence for each sequence in a consistent manner via a principal ultrafilter
in $\N$; see e.g.\ \cite[pp.\,77--80]{BridsonHaefliger}.

\begin{proof}[Proof of Theorem \ref{thm:PathLifting}.]
  Recall that by Lemma \ref{lemma:NormalProper} the restriction of a continuous, open and light
  mapping to a normal domain is a proper map.
  
  Fix a $(f,\beta,n)$-prelift $C_n$ for each $n \in \N$.
  We recursively construct a nested sequence of subsequences of $(C_n)$.
  First note that since $(C_n)$ is a sequence of continua in a
  a precompact domain $U$, it has a subsequence $(C_j^0)_j$ converging
  to a continuum $C \subset \overline{U}$ with respect to the Hausdorff metric.
  Furthermore since $f(C_n) \subset |\beta|$ with $|\beta| \cap \partial fU = \emptyset$,
  we have $C \subset U$.

  Suppose now that the subsequence $(C^{j-1}_m)_m$ has already been defined. We define the
  next subsequence $(C^{j}_m)_m$ in $2^{j}$ steps. First note that a restriction of the prelifts $C^{j-1}_m$ to the interval $[0,2^{-j}]$
  gives rise to a sequence of $(f, \beta|_{[0,2^{-j}]},j)$-prelifts. As before we
  may take a subsequence $(S_n^0)$ of $(C^{j}_m)_m$ such that restricted sequence converges.
  Now suppose that a subsequence $(S_n^k)_n$ of $(C^{j}_m)_m$ has been defined for $k < 2^{j}-1$. We set
  $(S_n^{k+1})$ to be the subsequence of $(S_n^k)$ such that the restrictions of these prelifts $S_n^k$
  to the interval $[k 2^{-j}, (k+1) 2^{-j}]$ form a converging sequence.
  Finally we set $(C^{j}_m)_m = (S^{2^{j}}_m)_m$.

  From these nested sequences $(C_n^j)_n$, $j \in \N$, we take the diagonal, i.e.\ we set
  $P_n = C_n^n$, and note that the sequence $(P_n)$ has the property that for any
  $j \in \N$ it is the subsequence of $(C_n^j)_n$ after possibly omitting finitely many elements.
  In particular, for any dyadic interval $\sigma \subset [0,1]$, the restrictions
  of $P_n$ to the interval $\sigma$ form a converging sequence in $U$ with respect to the Hausdorff metric.
  We show next that the limit of $P_n$, denoted $P$, is a path and thus a lift of $\beta$ under $f$.

  Fix $t_0 \in [0,1]$. Note that for any $m \in \N$ there are at most two intervals
  of the form $[k 2^{-m}, (k+1) 2^{-m}]$ containing $t_0$. Call the union of these two intervals
  $\sigma^m$.
  Since the restrictions of $P_n$ to any of the intervals $\sigma^m$ converge as $n \to \infty$, we thus
  see that with respect to the Hausdorff metric,
  \begin{align*}
    \left(P_n \cap f \inv ( \beta(\sigma^n) )\right)
    = \bigcap_{k=1}^n \left(P_n \cap f \inv ( \beta(\sigma^k) )\right)
    \to
    P \cap f \inv ( \beta(t_0) ).
  \end{align*}
  Furthermore, since the mesh of the prelifts $P_n$ tends to zero as $n \to \infty$, we note that also
  \begin{align*}
    \diam(P_n \cap f \inv ( \beta(\sigma^n) )) \to 0
    \quad
    \text{ as }
    \quad
    n \to \infty.
  \end{align*}
  Thus $P \cap f \inv ( \beta(t_0) )$
  is a point; call it $\alpha(t_0)$. We next note that since the mesh of the prelifts
  $P_n$ tends to zero, there exists for any $\epsilon > 0$ an integer $m_\epsilon \in \N$
  such that $d(\alpha(t_0), \alpha(t)) < \epsilon$ whenever $| t_0 - t| < 2^{-m_\epsilon}$.
  Thus the mapping $t \mapsto \alpha(t)$ is continuous.

  The path $\alpha \colon [0,1] \to U$ is now a path that maps onto $\beta$ under $f$,
  and $\alpha(0) = x_0$ by construction of the sequence $(C_n)$.
  Furthermore, since the prelifts were defined via pre-image components of pieces of
  $|\beta|$, we see that in fact $f \circ \alpha = \beta$ by Lemma 
  \ref{lemma:InducedMapIsContinuous}. Thus $\alpha$ is a lift of
  $\beta$ in $U$, and the claim holds true.
\end{proof}

We end this section with a proposition on the uniqueness of lifts into simply connected planar domains.
Note that this claim
clearly fails for maps between more general surfaces, even between spheres, and in higher dimensions. 
\begin{proposition}\label{prop:JCT-Basic}
  Let $\Omega \subset \C$ be a simply connected planar domain,
  $f \colon \Omega \to \C$ a continuous open map, and 
  let $\beta_1, \beta_2 \colon [0,1] \to \Omega$ be lifts
  of the same line segment $\alpha \colon [0,1] \to f\Omega$
  for which $\beta_1(0) = \beta_2(0)$ and
  $\beta_1(1) = \beta_2(1)$.
  Then $\beta_1 = \beta_2$.
\end{proposition}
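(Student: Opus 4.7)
The plan is to argue by contradiction via the Jordan curve theorem. If $\beta_1 \ne \beta_2$, then a piece of $\beta_1$ and the corresponding piece of $\beta_2$ together form a Jordan curve in $\Omega$; simple connectedness of $\Omega$ places the enclosed disk inside $\Omega$, and the openness of $f$ then forces the image of that disk into an arc, which is absurd.

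Since $\alpha$ is an arc, it is injective, and the relations $f\circ \beta_i = \alpha$ force each $\beta_i$ to be injective as well. Set $T = \{ t\in[0,1] \colon \beta_1(t)=\beta_2(t)\}$; this set is closed and contains $0$ and $1$. Assuming $T\neq[0,1]$, I pick a component $(a,b)$ of its complement, so that $\beta_1(a)=\beta_2(a)$ and $\beta_1(b)=\beta_2(b)$. If $\beta_1(s)=\beta_2(t)$ for some $s,t \in (a,b)$, then $\alpha(s)=\alpha(t)$, hence $s=t$ by injectivity of $\alpha$, which contradicts $s\notin T$. Thus $\beta_1((a,b))$ and $\beta_2((a,b))$ are disjoint, and
\[
J \colonequals \beta_1([a,b]) \cup \beta_2([a,b])
\]
is a Jordan curve contained in $\Omega$.

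The Jordan curve theorem provides a bounded topological disk $D \subset \C$ with $\partial D = J$. Since $\Omega$ is a simply connected planar domain, $\C\setminus \Omega$ has no bounded components, which forces $D \subset \Omega$. As $\overline{D}\subset \Omega$ is compact, $f(\overline{D})$ is closed, and hence $\overline{f(D)} \subset f(\overline{D}) = f(D) \cup f(J)$. The openness of $f$ gives $\partial f(D) = \overline{f(D)}\setminus f(D) \subset f(J)=\alpha([a,b])$. Consequently $f(D)\setminus \alpha([a,b])$ is an open set equal to its own closure in $\C\setminus \alpha([a,b])$, and is therefore clopen in the connected, unbounded set $\C\setminus \alpha([a,b])$. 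Since $f(D)$ is bounded, this clopen set must be empty, so $f(D)\subset \alpha([a,b])$; but $f(D)$ is a nonempty open subset of $\C$ while $\alpha([a,b])$ has empty interior, a contradiction.

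The delicate points are the topological ones: checking that $J$ really is a Jordan curve (where injectivity of $\alpha$ is used twice, to show each $\beta_i$ is injective and to separate the two open halves of $J$), and invoking simple connectedness of $\Omega$ to conclude $D\subset \Omega$. The rest reduces to the standard interplay between open maps and the facts that an arc has empty interior and does not disconnect the plane.
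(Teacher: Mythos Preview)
Your proof is correct and follows essentially the same route as the paper: assume $\beta_1\neq\beta_2$, isolate a maximal subinterval $(a,b)$ on which the lifts differ, form a Jordan curve from the two subarcs, take the bounded complementary domain, and obtain a contradiction from the openness of $f$ since the image of that domain would be bounded by a subarc of $\alpha$. The only differences are cosmetic: the paper quotes a version of the Jordan curve theorem already stated for simply connected planar domains (so the inclusion of the bounded component in $\Omega$ comes for free), and it condenses your clopen argument into the single observation that a precompact domain in $\C$ cannot have its boundary contained in an arc; your write-up simply unpacks these steps, and in fact is more explicit than the paper in verifying that the union $J$ really is a Jordan curve.
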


Proposition \ref{prop:JCT-Basic}
is an almost immediate consequence of
the following version of the Jordan curve theorem;
for a proof we refer to \cite[Section 4]{Moise} or
\cite[Theorem 1.10, p.\,33]{Pommerenke}. Furthermore we emphasize here
that by the boundary of a domain $U$ compactly contained in a planar domain
$\Omega$ we mean the boundary relative to $\Omega$. 
\begin{theorem*}[The Jordan curve theorem]\label{thm:Jordan}
  Let $\Omega \subset \C$ be a simply connected planar domain and
  let $c \colon \bS^1 \to \Omega$ be an injective continuous map.
  Then $\Omega \setminus |c|$ consists of two domains, exactly one
  of which has a compact closure in $\Omega$.
  Both of these domains have the image of the curve $c$ as their boundary.
\end{theorem*}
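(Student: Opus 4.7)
The plan is to prove the Jordan curve theorem for a simply connected planar domain by reducing to the classical case in $\C$ and then proving that case by polygonal approximation. A simply connected planar domain $\Omega$ is homeomorphic to $\C$: either $\Omega = \C$, or by the Riemann mapping theorem $\Omega$ is conformally equivalent to $\bD$, and $\bD$ is homeomorphic to $\C$ via a radial stretch. Fix a homeomorphism $\phi \colon \Omega \to \C$. Since both spaces are locally compact and noncompact, $\phi$ extends to a homeomorphism of one-point compactifications sending $\infty$ to $\infty$; in particular $\phi$ is proper, so a subset of $\Omega$ has compact closure in $\Omega$ if and only if its image under $\phi$ is bounded in $\C$. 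It therefore suffices to show that for any injective continuous map $\tilde c \colon \bS^1 \to \C$, the complement $\C \setminus |\tilde c|$ consists of exactly two domains, one bounded and one unbounded, both having $|\tilde c|$ as their boundary.

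For the polygonal case, let $P \subset \C$ be a polygonal simple closed curve and pick a direction $v$ not parallel to any edge of $P$. For each $z \in \C \setminus P$ define $\nu(z) \in \{0,1\}$ as the parity of the number of transverse intersections of the ray $\{z + tv : t > 0\}$ with $P$, handling vertex crossings by a fixed local convention. One checks that $\nu$ is locally constant on $\C \setminus P$, that its value flips across each edge, and that each of the two level sets is nonempty, open, and path-connected via polygonal paths avoiding vertices. One level set contains all sufficiently far points and is unbounded; the other is bounded; $P$ is the common boundary.

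For the general case, approximate $|\tilde c|$ uniformly by simple polygonal Jordan curves $P_n$. To construct $P_n$, partition $\bS^1$ into many small arcs on which $\tilde c$ has small oscillation, join consecutive endpoints by straight segments, and perturb slightly to remove self-intersections; this is possible because $\tilde c^{-1}$ is uniformly continuous on $|\tilde c|$ (being the inverse of a continuous bijection from the compact set $\bS^1$), so segments arising from far-apart arcs of $\bS^1$ stay apart in $\C$. Let $I_n, O_n$ denote the bounded and unbounded components of $\C \setminus P_n$. Define $I = \{z \in \C \setminus |\tilde c| : z \in I_n \text{ for all large } n\}$ and $O$ analogously. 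Verify that $I$ and $O$ are open, disjoint, and cover $\C \setminus |\tilde c|$; that each is connected (two points are joined by a compact path in the complement which must lie in $I_n$ or $O_n$ for all large $n$); and that every point of $|\tilde c|$ is approached from both sides, making $|\tilde c|$ their common boundary.

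The main obstacle is the polygonal approximation step: producing simple polygons $P_n$ converging uniformly to $|\tilde c|$ and verifying that the inside/outside decomposition passes to the limit with exactly two components. Near-self-intersections of $|\tilde c|$ are the chief hazard, since naive linear interpolation could create crossings; uniform continuity of $\tilde c^{-1}$ is precisely what lets us choose the arc partition of $\bS^1$ fine enough to avoid this. A cleaner but less elementary alternative uses Alexander duality, which yields exactly two components directly from the fact that $|\tilde c|$ is a topological circle, but this machinery falls outside the spirit of the present expository paper.
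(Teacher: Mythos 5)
The paper does not prove this statement at all: it is quoted as a classical result with references to Moise and Pommerenke, so there is no internal proof to compare against. Your reduction step is fine: $\Omega$ is homeomorphic to $\C$ (Riemann mapping theorem plus a radial stretch, or the identity when $\Omega=\C$), and under a homeomorphism components, relative boundaries, and the property of having compact closure in $\Omega$ all correspond, so the statement for $\Omega$ does follow from the classical Jordan curve theorem in $\C$. The polygonal parity argument is also a standard and workable sketch.

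The genuine gap is in the passage from polygons to the general curve, which is precisely where the entire difficulty of the Jordan curve theorem lives, and your outline leaves all three core assertions unproved. First, that $I$ and $O$ cover $\C\setminus|\tilde c|$ requires that for each fixed $z\notin|\tilde c|$ the membership $z\in I_n$ or $z\in O_n$ stabilizes; uniform convergence of $P_n$ to $|\tilde c|$ alone does not give this, and you would need an extra argument (e.g.\ that $P_n$ and $P_m$ are homotopic in $\C\setminus\{z\}$ via the straight-line homotopy of their parametrizations, so the winding number about $z$ stabilizes). Second, nonemptiness of $I$ -- that is, the existence of a bounded component at all -- is not addressed: a thin simple polygon can have arbitrarily small inradius, so nothing in the construction prevents the insides $I_n$ from containing no common point in the limit; this is the ``complement is disconnected'' half of the theorem and needs its own argument. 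Third, your connectivity argument for $I$ is circular: you join two points of $I$ ``by a compact path in the complement,'' which presupposes they already lie in the same component of $\C\setminus|\tilde c|$; what the argument actually shows is only that each component of $\C\setminus|\tilde c|$ lies entirely in $I$ or entirely in $O$, i.e.\ it does not rule out more than two components (the ``at most two'' half, which classically rests on the lemma that the complement of an arc is connected). The final claim that every point of $|\tilde c|$ is a boundary point of both components is likewise asserted (``verify that\dots approached from both sides'') without an argument, and it too depends on the arc lemma. As it stands the proposal is a plan whose hard core is deferred to ``one checks''; either supply these steps or, as the paper does, cite a complete source such as Moise or Pommerenke.
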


Besides the Jordan curve theorem we need also the following
result stating that a line segment is not a boundary of a bounded domain in the plane. 
We formulate the statement of the lemma in slightly more general form for later use.
\begin{lemma}\label{lemma:NoArcBoundary}
  Let $\Omega$ be a precompact planar domain and $\beta \colon [0,1] \to \Omega$ a line segment.
  Suppose $r>0$ is a radius such that the closed balls
  $\overline{B}(\beta(0),r)$ and $\overline{B}(\beta(1),r)$ are disjoint and contained in $\Omega$.
  Then there exists no precompact domain $W \subset \Omega$ such that
  \begin{align*}
    \partial W
    \subset |\beta| \cup \overline{B}(\beta(0),r) \cup \overline{B}(\beta(1),r)
  \end{align*}
  and
  \begin{align*}
    \left(|\beta| \setminus (\overline{B}(\beta(0),r) \cup \overline{B}(\beta(1),r)) \right) \cap \partial W \neq \emptyset.
  \end{align*}
\end{lemma}
\begin{proof}
  Towards contradiction, suppose there exists such a precompact domain $W$.
  Let
  \begin{align*}
    z \in \left(|\beta| \setminus (\overline{B}(\beta(0),r) \cup \overline{B}(\beta(1),r)) \right) \cap \partial W
  \end{align*}
  and take a radius $s>0$ such that $\overline{B}(w,s)$ intersects neither of the closed balls
  $\overline{B}(\beta(0),r)$ and $\overline{B}(\beta(1),r)$.

  Now the line segment $\beta$ divides $B(w,s)$ into two domains, one of which must be contained
  in $W$ since $B(w,s) \cap \partial W$ is contained in the line segment $|\beta|$. Denote that domain by $V$.
  Let now $\alpha \colon [0,1) \to \Omega$ be a line segment, perpendicular to $\beta$, that starts from
  $w$, intersects $V$ and ends at the boundary of $\Omega$.
  Since $\alpha$ does not intersect either of the balls
  $\overline{B}(\beta(0),r)$ and $\overline{B}(\beta(1),r)$, we
  conclude that it intersects the boundary of $W$ at most once at $\alpha(0)$.
  Thus $\alpha(0,1) \subset W$ and so $W$ is not precompact as $\alpha(1) \notin W$.
\end{proof}

\begin{proof}[Proof of Proposition \ref{prop:JCT-Basic}]
  Suppose $\beta_1 \neq \beta_2$. Then there exists $t_0 \in [0,1]$ for which
  $\beta_1(t_0) \neq \beta_2(t_0)$.
  Let
  \begin{align*}
    a = \sup \{ t \in [0,t_0] \mid \beta_1(t) = \beta_2(t) \}
    \textrm{ and }
    b = \inf \{ t \in [t_0,1] \mid \beta_1(t) = \beta_2(t) \}.
  \end{align*}
  Then there exists an injective continuous map $c \colon \bS^1 \to \Omega$ for which
  $|c| = | \beta_1|_{[a,b]}  | \cup |\beta_2|_{[a,b]}|$, and so
  $|f\circ c| = \alpha[a,b]$.
  By the Jordan curve theorem, one of the components of $\Omega \setminus |c|$,
  say $U$, is a precompact subdomain of $\Omega$ with $\partial U = |c|$.

  Thus $fU$ is a precompact domain in $\C$ 
  and $\partial fU \subset |\alpha|$. This is a contradiction with 
  Lemma \ref{lemma:NoArcBoundary} since $\alpha$ is a line segment.
  The claim follows.
\end{proof}

\section{Proof of Theorem \ref{thm:light_to_discrete}}
\label{sec:FromLightToDiscrete}

We use now the path-lifting result, Theorem \ref{thm:PathLifting}, and the Jordan curve theorem
to prove Theorem \ref{thm:light_to_discrete}, that is, to show that a continuous, open and light planar map is discrete. We follow here an idea of Sto\"ilow that the discreteness of the mapping
follows from the finiteness of lifts of line segments. Sto\"ilow calls the following proposition, together with the existence of lifts, \emph{Th\'eor\`eme Fondamental} 
(\cite[p.\,361]{Stoilow}).

\begin{proposition}\label{prop:LiftBound}
  Let $\Omega$ be a planar domain and
  $f \colon \Omega \to \C$ a continuous, open and light map.
  Let $x \in \Omega$ and let $r>0$ be so small that 
  $U \colonequals U(x,f,r)$ is a 
  normal domain contained in a simply connected neighborhood of $x$ in $\Omega$.
  Suppose $\beta \colon [0,1] \to B(f(x),r)$
  is a ray $t \mapsto t z_0 + f(x)$, where $z_0 \in \bS^1$.
  Then there exists at most 
  finitely many lifts of $\beta$ in $U$.
\end{proposition}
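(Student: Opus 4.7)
The plan is to argue by contradiction: suppose $\beta$ admits an infinite sequence $(\alpha_n)_{n\in\N}$ of pairwise distinct lifts in $U$, and aim for a compactness plus Jordan curve contradiction. Each $\alpha_n$ is automatically injective because $\beta$ is, and, by applying Proposition~\ref{prop:JCT-Basic} to restrictions to subintervals, for any two lifts $\alpha,\alpha'$ the agreement set $\{t\in[0,1]:\alpha(t)=\alpha'(t)\}$ is a closed subinterval of $[0,1]$; in particular two distinct lifts never share both endpoints.

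The first step is to produce a limit lift via Arzel\`a--Ascoli. Equicontinuity of $(\alpha_n)$ follows from Lemma~\ref{lemma:FloydDiameter}: given $\eps>0$, the uniform continuity of $\beta$ provides $\eta>0$ such that for $|t-t'|<\eta$ the continuum $\beta([t,t'])$ has diameter less than the $\delta$ of that lemma, and then each component of $U\cap f\inv\beta([t,t'])$ has diameter at most $\eps$; since $\alpha_n([t,t'])$ is connected and contained in one such component, $|\alpha_n(t)-\alpha_n(t')|\leq\eps$ uniformly in $n$. Extracting a uniformly convergent subsequence, still denoted $(\alpha_n)$, one obtains a continuous limit $\alpha_\infty\colon[0,1]\to\overline U$ satisfying $f\circ\alpha_\infty=\beta$. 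Since $\beta([0,1])\subset B(f(x),r)$ while $f(\partial U)=\partial B(f(x),r)$ by normality of $U$, the image $\alpha_\infty([0,1])$ cannot touch $\partial U$, so $\alpha_\infty$ is itself a lift in $U$.

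After dropping the at most one index where $\alpha_n=\alpha_\infty$ and passing to a further subsequence, one may assume $\alpha_n(0)\ne\alpha_\infty(0)$ for every $n$, while $\alpha_n(0)\to\alpha_\infty(0)$. For each large $n$ the convexity of the agreement set either gives $|\alpha_n|\cap|\alpha_\infty|=\emptyset$ or shows that the two arcs agree only along an interval $[a_n,b_n]\subset(0,1]$. In either case I would concatenate suitable pieces of $\alpha_n$ and $\alpha_\infty$ with one or two short connecting arcs in $U$---chosen to miss the long arcs, to lie in an arbitrarily small neighbourhood of $\alpha_\infty(0)$ (or $\alpha_\infty(1)$), and to have arbitrarily small $f$-image---to form a Jordan curve $c_n$ in the simply connected neighbourhood. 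The Jordan curve theorem then provides a precompact domain $D_n$ bounded by $c_n$, and $f(D_n)$ is a nonempty open subset of $B(f(x),r)$ with $\partial f(D_n)\subset f(|c_n|)$, where $f(|c_n|)$ is contained in the union of a subarc of the straight segment $|\beta|$ and a small set near $f(x)$ or $\beta(1)$.

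The main obstacle, and the crux of the argument, is the final topological step: extracting a contradiction from the fact that $f(D_n)$ is a nonempty open set whose boundary lies in such a thin $1$-dimensional set in $\C$. The straightness of the ray $\beta$ enters essentially, since $|\beta|$ has empty interior and the small connecting arcs contribute only vanishingly small $f$-images; one concludes by showing that, along a subsequence, the sets $f(D_n)$ must collapse onto a subset of $|\beta|$ in a manner inconsistent with openness. Verifying the injectivity of $c_n$ on $\bS^1$ and making the collapse argument precise using only continuity and openness of $f$ is where the bulk of the technical work lies.
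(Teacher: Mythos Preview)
Your overall strategy---assume infinitely many lifts, build a Jordan curve from two of them plus short connectors, and use openness to contradict the thinness of the image---is the same as the paper's. Two of your preliminary observations are correct and useful: each lift is an arc, and Proposition~\ref{prop:JCT-Basic} applied on subintervals does force the agreement set of two lifts of $\beta$ to be a closed subinterval.

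However, there are genuine gaps. First, the reduction ``pass to a subsequence with $\alpha_n(0)\ne\alpha_\infty(0)$'' need not be possible: nothing prevents infinitely many lifts from sharing the starting point $\alpha_\infty(0)$ (indeed, finiteness of $U\cap f^{-1}\{\beta(0)\}$ is exactly what the proposition is meant to yield). In that situation your case split never gets off the ground. Second, even granting distinct starting points, your treatment of the intersecting case is only a sketch: you propose to splice in ``short connecting arcs \dots\ with arbitrarily small $f$-image'' and then run a limiting ``collapse'' argument, but neither the injectivity of the resulting loop nor the final contradiction is actually established---as you yourself note.

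The paper handles both difficulties by avoiding the limit lift entirely. It only uses that the \emph{endpoints} $\tilde\beta_n(0)$ and $\tilde\beta_n(1)$ converge, and then applies a Ramsey-type dichotomy to obtain either a subsequence of pairwise \emph{disjoint} lifts or a subsequence of pairwise \emph{intersecting} lifts. In the disjoint case two lifts with nearby endpoints, joined inside small normal domains $V,V'$ about the limit endpoints, already form a Jordan curve whose $f$-image lies in $|\beta|\cup fV\cup fV'$; openness then gives the contradiction directly, with no limiting process. The intersecting case is not argued in place: instead one introduces a \emph{second} ray $\gamma$ emanating from $\beta(1)$ with $|\gamma|\cap|\beta|=\{\beta(1)\}$, lifts $\gamma$ from each $\tilde\beta_n(1)$, and observes (via Proposition~\ref{prop:JCT-Basic} applied to the concatenation of $\beta$ and $\gamma$) that these $\gamma$-lifts are pairwise disjoint---reducing to the disjoint case. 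This auxiliary-ray trick is the idea your argument is missing; it replaces the vague Jordan-curve construction in the intersecting case by a clean reduction. Your Arzel\`a--Ascoli step, while correct, is not needed for the argument and does not help with the case you leave open.
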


With this proposition at our disposal, Theorem \ref{thm:light_to_discrete} follows immediately.
\begin{proof}[Proof of Theorem \ref{thm:light_to_discrete} assuming Proposition \ref{prop:LiftBound}]
  Let $x_0 \in \Omega$ be a point and fix a normal domain
  $U_0$ of $x_0$ contained in some simply connected neighborhood
  of $x$ in $\Omega$.
  By Theorem \ref{thm:PathLifting} a ray 
  in $fU_0$ starting from $f(x_0)$ has a lift starting from each pre-image
  of $f(x_0)$ in $U_0$.
  By Proposition \ref{prop:LiftBound}, such a ray 
  has only finitely many lifts in
  $U_0$. Thus the set $U_0 \cap f \inv \{ f(x_0) \}$ is finite and so $f$ is a discrete map.
\end{proof}

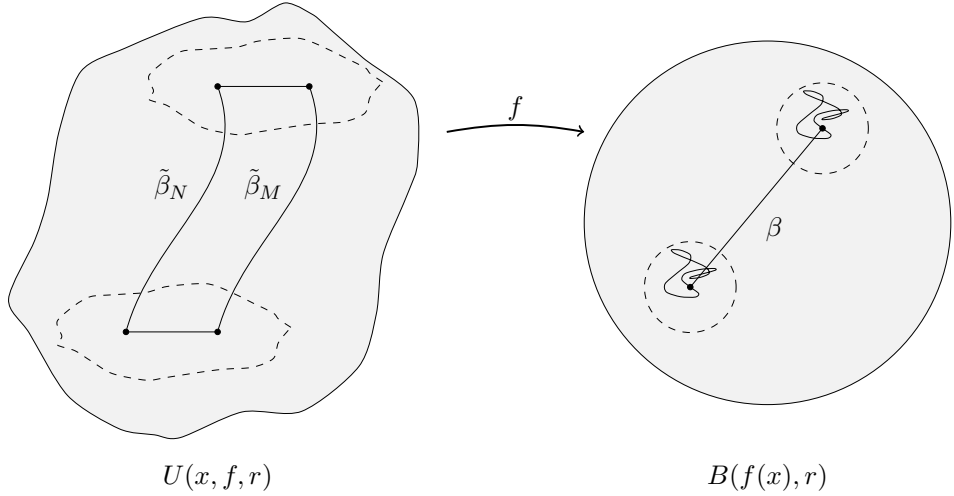
\begin{figure}
  \centering
  \resizebox{\textwidth}{!}{
    \begin{tikzpicture}

      \begin{scope}[scale=1.3]
        
        \begin{scope}[shift={(3,0)}]
          \draw[fill,black!5] (0,0) circle [radius = 2];
          \draw[] (0,0) circle [radius = 2];

          \coordinate (a) at (220:1.1);
          \coordinate (b) at (60:1.2);

          \foreach \point in {a,b}{
            \draw[fill] (\point) circle [radius=0.03];
            \draw[dashed] (\point) circle [radius=0.5];

            \begin{scope}[shift={(\point)},scale=0.15,rotate=90]
              \draw[smooth,domain=0:360] plot
              (
              {1+cos( \x )+sin(2*\x)},
              {sin( \x )+sin(4*\x)}
              );
            \end{scope}
          }
          
          \draw[-] (a) -- (b);
          \node[below right] at  ($(a)!0.5!(b)$) {$\beta$};

          \node at (0,-2.8) {$B(f(x),r)$};
        \end{scope}

        \draw[->, thick] (-0.5,1) to [out=10,in=170] (1,1);
        \node [below] at (0.25,1.5) {$f$};

        \begin{scope}[shift={(-3,0)}]
          
          \begin{scope}[rotate=60]
            \draw [fill=black!5,smooth,domain=0:360] plot 
            (
            {2.5*cos( \x )+0.1*sin(34*\x)}, 
            {2*sin( \x ) + 0.2*cos(5* \x )}
            );
          \end{scope}

          \coordinate (0) at (0.5,1.5);
          \coordinate (1) at (-0.5,-1.2);

          \foreach \point in {0,1} 
          {
            \begin{scope}[shift={(\point)},scale=0.5 ]
              \draw [dashed,smooth,domain=0:360] plot 
              (
              {2.5*cos( \x )+0.1*cos(34*\x)}, 
              {1*sin( \x ) + 0.1*cos(5* \x )}
              );

              \coordinate (N\point) at (-1,0);
              \coordinate (M\point) at (1,0);
              
              \draw[fill] (N\point) circle [radius=0.06];
              \draw[fill] (M\point) circle [radius=0.06];
              \draw (N\point)--(M\point);
            \end{scope}
          }
          
          \foreach \point in {N,M}
          {
            \draw (\point0) to [out=290, in=75] (\point1); 
            \node[above] at ($(\point0)!0.5!(\point1)$) {$\tilde\beta_\point$};
          }  

          \node at (0,-2.8) {$U(x,f,r)$};

        \end{scope}

      \end{scope}

    \end{tikzpicture}
  }
  \caption{Constructing a ``rectangular Jordan curve'' in the proof of Proposition
    \ref{prop:LiftBound}.}
  \label{fig:LtD-2}
\end{figure}

For the proof of Proposition \ref{prop:LiftBound} we require a few auxiliary results.
The first one is the following
technical lemma about the intersection properties of a sequence of sets
which will be applied to the images of a sequence of paths.
The proof requires no geometric properties of paths or planar topology;
this result is actually a version of the infinite Ramsey theorem for 
two colors; see e.g.\;\cite[Theorem 5, p.\,16]{GRS-Book}.
\begin{lemma}\label{lemma:Ramsey}
  Let $X$ be a set and $(A_n)$ be a sequence of subsets of $X$.
  Then there exists a subsequence $(C_n)$ of $(A_{n})$ 
  for which either
  \begin{enumerate}[(a)]
  \item all the sets $C_n$ are mutually disjoint, or
  \item any two of the sets $C_n$ intersect.
  \end{enumerate}
\end{lemma}
\begin{proof}
  We construct fist a subsequence $(B_n)$ of $(A_n)$
  with the property that for any $k \in \N$ either
  \begin{enumerate}[(i)]
  \item $B_j \cap B_k = \emptyset$ for all $j > k$, or
  \item $B_j \cap B_k \neq \emptyset$ for all $j > k$.
  \end{enumerate}
  We construct the sequence $(B_n)$ recursively together
  with a sequence of nested auxiliary subsequences $(B^k_n)_n$ of $(A_n)$.
  Set first $B_0 = A_0$. Now either $B_0$ intersects
  infinitely many elements of $(A_n)_{n \geq 1}$ or
  is disjoint from infinitely many elements of $(A_n)_{n \geq 1}$.
  Choose one such infinite collection and call it $(B^0_n)$.
  
  Suppose that $(B_0, \ldots, B_k)$ and $((B^0_n), \ldots, (B^k_n))$
  have been defined. We set $B_{k+1} = B^{k}_0$ and note that either
  $B_{k+1}$ intersects
  infinitely many elements of $(B^{k}_n)_{n\geq 1}$ or
  is disjoint from infinitely many elements of $(B^{k+1}_n)_{n \geq 1}$.
  Choose one such infinite collection and call it $(B^{k+1}_n)$.
  
  Since for all $k \in \N$ the sequence $(B^{k}_n)_n$ is a subsequence
  of $(B^{k-1}_n)_n$, we note that the required property holds for
  $(B_n)$, i.e.\ for any $k \in \N$ either (i) or (ii) holds.

  To complete the proof we note that there are two types of
  elements in the sequence $(B_n)$; those satisfying condition
  (i) and those satisfying condition (ii). Thus there must be infinitely
  many of elements of either type (i) or type (ii). Call one such collection
  $(C_n)$. Now either all elements of $(C_n)$ intersect all of the consequtive elements
  of the sequence
  or all elements of $(C_n)$ intersect none of the consequtive elements of the sequence.
  Thus we see that $(C_n)$ satisfies the condition in the statement of the lemma and
  so the proof is complete.
\end{proof}

Finally, we require the following lemma that allows us to extend pairs of disjoint arcs into a Jordan curve.
\begin{lemma}\label{lemma:JordanConstruction}
  Let $\Omega$ be a planar domain with $A,B \subset \Omega$
  domains such that $\overline{A} \cap \overline{B} = \emptyset$
  and $\overline{A}, \overline{B} \subset \Omega$. Suppose there
  exists two arcs $\alpha, \beta \colon [0,1] \to \Omega$ for which
  $\alpha(0), \beta(0) \in A$,
  $\alpha(1), \beta(1) \in B$ and 
  $|\alpha| \cap |\beta| = \emptyset$.
  Then there exists a Jordan curve $c \colon \bS^1 \to \Omega$ for which
  $|\alpha| \setminus (A \cup B) \subset |c|$,
  $|\beta| \setminus (A \cup B) \subset |c|$ and 
  $|c| \subset A \cup B \cup | \alpha| \cup |\beta|$.
\end{lemma}
\begin{proof}
  Planar domains are arc-connected, so there exists
  arcs $\gamma_A \colon [0,1] \to A$ and $\gamma_B \colon [0,1] \to B$ connecting
  the pairs of points $(\alpha(0), \beta(0))$ and $(\alpha(1),\beta(1))$, respectively.
  Fix
  \begin{align*}
    t_A^1
    \colonequals \sup \{ t \in [0,1] \mid \gamma_A(t) \in |\alpha| \},
    \quad t_A^2
    \colonequals \inf \{ t \in [t_A^1,1] \mid \gamma_A(t) \in |\beta| \}, \\
    t_B^1
    \colonequals \sup \{ t \in [0,1] \mid \gamma_B(t) \in |\alpha| \}, \text{ and }
    t_B^2
    \colonequals \inf \{ t \in [t_B^1,1] \mid \gamma_B(t) \in |\beta| \}.
  \end{align*}
  We note that the restrictions $\gamma_A |_{[t_A^1, t_A^2]}$ and $\gamma_B |_{[t_B^1, t_B^2]}$ are arcs
  that meet the paths $\alpha$ and $\beta$ only at their endpoints.
  Thus we may define the Jordan curve $c$ as a concatenation of these two restrictions together with
  the restrictions of $\alpha$ and $\beta$ to suitable subintervals.
\end{proof}

\begin{proof}[Proof of Proposition \ref{prop:LiftBound}]
  Suppose a ray $\beta\colon [0,1]\to fU$ has infinitely many mutually
  distinct lifts $\tilde\beta_n \colon [0,1] \to U$, $n \in \N$.
  By passing to a subsequence if necessary, we may assume that
  $\tilde\beta_n(0) \to x_0 \in \overline{U}$ and
  $\tilde\beta_n(1) \to y_0 \in \overline{U}$ as $n\to \infty$. 
  Since $U$ is a normal domain and $\beta(0),\beta(1) \in B(f(x),r)$, 
  we have, in fact, that $x_0, y_0 \in U$.

  By Lemma \ref{lemma:Ramsey} we may assume, by passing again to a subsequence if necessary, that 
  for the sequence $(\tilde \beta_n)$ either
  \begin{enumerate}[(a)]
  \item the images of the lifts $\tilde \beta_{n}$ are mutually disjoint, or
  \item any two of the images of the lifts $\tilde \beta_{n}$ intersect.
  \end{enumerate}
  
  Suppose first that (a) holds. 
  Let $s > 0$ be a radius for which  
  $V =U(x_0,f,s)$ and $V' =U(y_0,f,s)$ 
  are mutually disjoint normal domains
  of $x_0$ and $y_0$, respectively.
  Let $M, N \in \N$ be distinct indices for which 
  \begin{align*}
    \tilde \beta_M(0),\tilde \beta_N(0) \in V
    \quad \text{ and } \quad
    \tilde \beta_M(1),\tilde \beta_N(1) \in V'. 
  \end{align*}
  Then there exists by Lemma \ref{lemma:JordanConstruction} a Jordan curve $c \colon \bS^1 \to U$
  for which 
  \begin{align*}
    |c| \subset V \cup V' \cup |\tilde \beta_M| \cup |\tilde \beta_N |\quad \text{and}\quad |c| \not \subset V \cup V'.
  \end{align*}
  The image $|f\circ c|$ is a continuum
  contained in the ray $|\beta|$ and two mutually
  disjoint disks $B(f(x_0), s)$ and $B(f(y_0),s)$ located at the endpoints of $\beta$;
  see Figure \ref{fig:LtD-2}. 
  Let $K = |\beta| \cup \left( \bar B(f(x_0),s) \cup \bar B(f(y_0),s)\right)$.

  By the Jordan curve theorem
  the curve $c$ bounds a precompact domain $W$ in $U$, having boundary equal to $|c|$.
  Now, on the one hand, the boundary $\partial fW$ of $fW$ is contained in $|c| \subset K$.
  Also note that since $U = U(x,f,r)$, we in particular have that $fU = B(f(x),r)$ and so $fW$ is a precompact domain in a disk.
  On the other hand, the closure of $fW$ contains the segment $|\beta|\setminus \left( B(f(x_0),s)\cup B(f(y_0),s)\right)$.
  Thus, by connectedness of $fW$, we have $fW \supset B(f(x),r) \setminus K$. In particular, $fW$ is not precompact in $B(f(x),r)$.
  By Lemma \ref{lemma:NoArcBoundary} there can be no such domain $fW$.
  This is a contradiction, and the proof in case (a) is complete.

  Suppose now that (b) holds.
  By Proposition \ref{prop:JCT-Basic} one of the sets 
  \begin{align*}
    \{ \tilde \beta_j(1) \mid j \in \N \}
    \quad\text{ or }\quad
    \{ \tilde \beta_j(0) \mid j \in \N \}
  \end{align*}
  must be infinite. Thus we may assume, by changing the direction of the path $\beta$ and 
  by passing to a subsequence if necessary, that
  the endpoints $\tilde \beta_j(1)$ are all distinct.
  
  Let $\gamma \colon [0,1] \to B(f(y_0),r)$ be a ray for which 
  $\gamma(0) = f(y_0)$ and $|\gamma| \cap |\beta| = f(y_0)$;
  see Figure \ref{fig:LtD-1}. For each $n \in \N$,
  let $\tilde \gamma_n$ be a lift of $\gamma$ 
  starting from $\tilde \beta_n(1)$.
  We show next that all the lifts $\tilde \gamma_n$ are pair-wise disjoint.
  Towards contradiction suppose not and take $\tilde \gamma_k$ and $\tilde \gamma_j$
  such that $\tilde \gamma_k(s_0) = \tilde \gamma_j(s_0)$ for some $s_0 \in [0,1]$.
  Now by the property (b) the two lifts $\tilde \beta_k$ and $\tilde \beta_j$
  with corresponding indices also intersect;
  $\tilde \beta_k(t_0) = \tilde \beta_j(t_0)$ for some $t_0 \in [0,1]$.
  We may assume $t_0$ is the first such intersection point, i.e.\
  either $t_0 = 0$ or $\tilde \beta_k(t) \neq \tilde \beta_j(t)$ for all $t < t_0$.
  For $i = k,j$ we define the concatenations $\theta_i \colon [0,1] \to U$
  by setting
  \begin{align*}
    \theta_i(t)
    =
    \begin{cases}
      \tilde \beta_i (2t), & t \in [0,1/2] \\
      \tilde \gamma_i (2t-1), & t \in [1/2,1].
    \end{cases}
  \end{align*}
  Now applying Proposition \ref{prop:JCT-Basic} to the restrictions
  \begin{align*}
    \theta_k|_{[t_0/2,(s_0+1)/2]}
    \quad
    \text{ and }
    \quad
    \theta_j|_{[t_0/2,(s_0+1)/2]}
  \end{align*}
  gives rise to a contradiction and we conclude
  that all the lifts $\tilde \gamma_n$ are pair-wise disjoint.
  
  Now the argument of case (a) applies to the
  sequence $(\tilde \gamma_n)$, which is again a contradiction
  and the proof is complete.
\end{proof}

\begin{figure}
  \centering
  \resizebox{\textwidth}{!}{
    \begin{tikzpicture}

      \begin{scope}[scale=1.2]

        \begin{scope}[shift={(-3,0)}]
          
          \begin{scope}[rotate=15]
            
            \draw [fill=black!5,smooth,domain=0:360] plot 
            (
            {2.5*cos( \x )+0.1*sin(34*\x)}, 
            {2*sin( \x ) + 0.2*cos(5* \x )}
            );
            
            \coordinate (X) at (-0.5,-1.5);
            \draw[fill] (X) circle [radius =0.05];
            \node[below] at (X) {$x_0$};

            \draw (X) to [out=90,in=180] (-1,0);
            \node[left] at (-0.5,0.5) {$\tilde \beta$};
            
            \def\N{15}
            
            \draw (-1,0) to (1-2/\N,0);
            
            \foreach \i in {1,...,\N}
            {
              \draw {(1-2/\i,0)} to [out=10,in=270] (1-1/\i,1/\i);
              \draw[fill] (1-1/\i,1/\i) circle [radius = 0.02];
            }
            
            \foreach \i in {3,...,\N}
            {
              \draw[black!80] {(1-1/\i,1/\i)} to [out=50,in=200] (1-1/\i+0.2,1/\i+0.2);
              \draw[fill] (1-1/\i+0.2,1/\i+0.2) circle [radius = 0.02];
              \draw[fill] (1-1/\i,1/\i) circle [radius = 0.02];
            }

            \draw[fill] (1+0.2,0.2) circle [radius = 0.02];
            
            \draw[fill] (1,0) circle [radius = 0.02];
            \node[below] at (1,0) {$y_0$};

            \node at (1,-1) {$U(y_0,f,r_0)$};

            \node at (-1.5,2.2) {$U$};
            
            \begin{scope}[shift={(1.1,0.3)},scale=0.35,rotate=60]
              \draw [dashed,smooth,domain=0:360] plot 
              (
              {2.5*cos( \x )+0.1*sin(34*\x)}, 
              {2*sin( \x ) + 0.2*cos(5* \x )}
              );

            \end{scope}

          \end{scope}
          
        \end{scope}

        \begin{scope}[shift={(3,0)}]
          \draw[fill,black!5] (0,0) circle [radius = 2];
          \draw[] (0,0) circle [radius = 2];

          \coordinate (a) at (200:1.5);
          \coordinate (b) at (50:1);
          \coordinate (c) at (60:1.2);

          \draw[fill] (a) circle [radius=0.03];
          \draw[fill] (b) circle [radius=0.03];
          \draw[fill] (c) circle [radius=0.03];
          
          \draw[-] (a) -- (b);
          \draw[-] (b) -- (c);

          \node[below right] at  ($(a)!0.5!(b)$) {$\beta$};
          \node[right] at  ($(b)!0.5!(c)$) {$\gamma$};

          \draw[dashed] (50:1) circle [radius = 0.5];
        \end{scope}

        \draw[->, thick] (-0.5,1) to [out=10,in=170] (1,1);
        \node [below] at (0.25,1) {$f$};
        
      \end{scope}
      
    \end{tikzpicture}
  }
  \caption{Path having infinitely many lifts with a joint starting point
    in the proof of Proposition \ref{prop:LiftBound}.}
  \label{fig:LtD-1}
\end{figure}
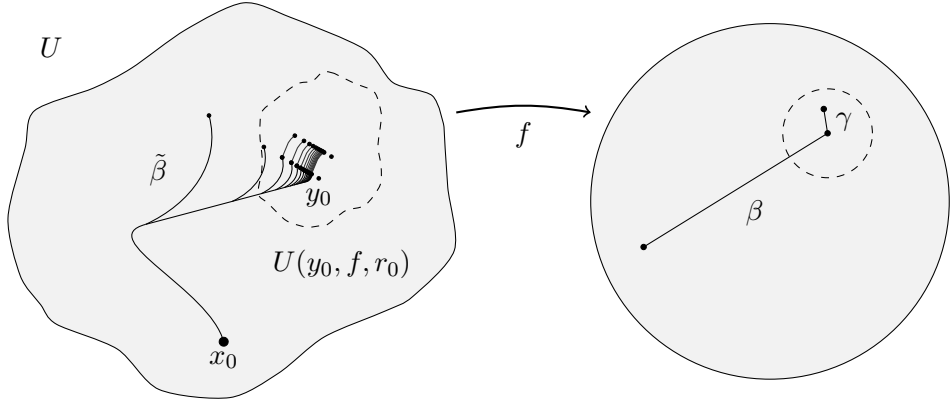

\section{Proofs of Theorems \ref{thm:discrete_branch} and \ref{thm:LocalStoilow}}
\label{sec:last}

Theorem \ref{thm:discrete_branch} is a direct consequence of 
the following proposition. For the statement, we say that a normal domain $U$ of a continuous, open and
discrete map $f\colon \Omega \to \C$ is a \emph{normal neighborhood of $x$ (with respect to $f$)}, if
$\overline{U} \cap f \inv \{f(x)\} = \{ x \}$.
The existence of such normal neighborhoods follows immediately by
noting that on the one hand for the discrete map $f$ and any
point $ x \in \Omega$, we have $d(x, f \inv \{ f(x) \} \setminus \{ x \} ) > 0$ and on the other hand that
for continuous, open and discrete mappings $\diam(U(x,f,r)) \to 0$ as $r \to 0$. The latter property in turn follows e.g.\ from
Lemma \ref{lemma:FloydDiameter}, but see also \cite[Lemma I.4.9]{RickmanBook}.

\begin{proposition}\label{prop:PlaneVaisala}
  Let $\Omega$ be a planar domain and let
  $f \colon \Omega \to \C$ be a continuous, open and discrete map.
  Let $x_0 \in \Omega$ and let $r>0$ be so small that $U_0 \colonequals U(x_0,f,r)$ is 
  a normal neighborhood of $x_0$ contained in a simply connected domain in $\Omega$. 
  Then $U_0 \cap B_f \subset \{ x_0 \}$.
\end{proposition}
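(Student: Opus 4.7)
The plan is to argue by contradiction: assume that some $x \in U_0 \setminus \{x_0\}$ lies in $B_f$, and construct a Jordan curve in $U_0$ whose $f$-image is contained in a Jordan arc. The Jordan curve theorem will then force a contradiction, mimicking the strategy of the proof of Proposition~\ref{prop:LiftBound}.

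First I would shrink $s > 0$ so that $V := U(x,f,s)$ is a normal neighbourhood of $x$ with $s < |f(x)-f(x_0)|$. Then $f(x_0) \notin \overline{B(f(x),s)}$, and connectedness of $V$ gives $V \subset U_0 \setminus \{x_0\}$. Since $x$ is a branch point, the proper, open, discrete map $f|_V \colon V \to B(f(x),s)$ fails to be injective, so there exist $y_1 \ne y_2$ in $V \setminus \{x\}$ with common value $w \ne f(x)$. Lifting the segment $\beta$ from $f(x)$ to $w$ (first lift $\beta^{-1}$ from $y_j$ via Theorem~\ref{thm:PathLifting} and reverse) gives two lifts $\tilde\beta_j \colon [0,1] \to V$ of $\beta$ starting at $x$ and ending at $y_j$. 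Because two lifts of an arc can coincide only at equal parameter values, applying Proposition~\ref{prop:JCT-Basic} to every prefix $\beta|_{[0,t]}$ forces the coincidence set of $\tilde\beta_1$ and $\tilde\beta_2$ to be a closed initial segment $[0,a]$ with $a < 1$. Writing $p := \tilde\beta_j(a)$ and $\gamma_j := \tilde\beta_j|_{[a,1]}$, I obtain two arcs from $p$ to $y_1$ and from $p$ to $y_2$ meeting only at $p$.

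The central step is to close this ``V-shape'' into a Jordan curve by routing through $x_0$. Because $f(x_0) \notin |\beta|$ and $B(f(x_0),r) \setminus (|\beta| \setminus \{w\})$ is path connected, I can choose an arc $\rho$ from $w$ to $f(x_0)$ inside $B(f(x_0),r)$ with $|\rho| \cap |\beta| = \{w\}$. Each lift of $\rho$ from $y_j$ inside $U_0$ terminates at $x_0$, the unique preimage of $f(x_0)$ in $\overline{U_0}$; Proposition~\ref{prop:JCT-Basic} applied to each suffix $\rho|_{[s,1]}$ forces the two lifts to agree on a final segment $[s^*,1]$, so after truncation I obtain arcs $\tilde\rho_j$ from $y_j$ to a common endpoint $q$ meeting only at $q$. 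The transversality $|\rho| \cap |\beta| = \{w\}$, combined with the observation that each of the four arcs meets $f^{-1}\{w\}$ only at its prescribed endpoint in $\{y_1,y_2\}$, yields the intersection pattern $|\gamma_j| \cap |\tilde\rho_k| = \{y_j\}$ when $j = k$ and $\emptyset$ otherwise. Consequently the concatenation $c := \gamma_1 \cdot \tilde\rho_1 \cdot \tilde\rho_2^{-1} \cdot \gamma_2^{-1}$ is a Jordan curve in $U_0$ with four distinct vertices $p, y_1, q, y_2$, and $f \circ c$ has image contained in the Jordan arc $A := \beta([a,1]) \cup \rho([0,s^*])$.

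Finally, applying the Jordan curve theorem inside the simply connected neighbourhood containing $U_0$ yields a precompact subdomain $W$ with $\partial W = |c|$. Then $fW$ is a non-empty bounded open subset of $\C$ with $\partial(fW) \subset A$. Since the complement in $\C$ of any closed subset of a Jordan arc is connected, $\C \setminus \partial(fW)$ is connected; but it is the disjoint union of the two non-empty open sets $fW$ and $\C \setminus \overline{fW}$, which is impossible. The main obstacle is the bookkeeping in the middle step—arranging $\rho$ transverse to $\beta$ at $w$ and verifying that the four arcs forming $c$ meet pairwise only at the four prescribed vertices, which requires carefully exploiting both the normality of $U_0$ at $x_0$ and of $V$ at $x$.
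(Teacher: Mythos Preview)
Your proof is correct, and the overall setup coincides with the paper's: pick a normal neighbourhood $V$ of the putative branch point $x$, find two preimages $y_1\neq y_2$ of some $w\in fV$, lift an arc from $w$ to $f(x)$ inside $V$ (so the lifts end at $x$) and an arc from $w$ to $f(x_0)$ inside $U_0$ transverse to the first (so the lifts end at $x_0$). The difference is only in how the contradiction is extracted. The paper simply concatenates each pair of lifts into a single path $\tilde\gamma_j$ from $x$ to $x_0$; since $\tilde\gamma_1$ and $\tilde\gamma_2$ are lifts of the \emph{same} arc with the same endpoints, a single application of Proposition~\ref{prop:JCT-Basic} gives $\tilde\gamma_1=\tilde\gamma_2$, contradicting $y_1\neq y_2$. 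You instead unwind the mechanism behind Proposition~\ref{prop:JCT-Basic}: you locate the last coincidence point $p$ on the $\tilde\beta_j$'s and the first coincidence point $q$ on the $\tilde\rho_j$'s, assemble the four resulting arcs into an explicit Jordan curve, and invoke the Jordan curve theorem directly. This is valid---your bookkeeping on the pairwise intersections via $|\beta|\cap|\rho|=\{w\}$ and the injectivity of the lifted arcs is sound---but it is effectively re-proving Proposition~\ref{prop:JCT-Basic} inside the argument. Once you have your $\tilde\beta_j$ and $\tilde\rho_j$, the concatenations $\tilde\beta_j\cdot\tilde\rho_j$ already satisfy the hypotheses of Proposition~\ref{prop:JCT-Basic}, and the contradiction is immediate.
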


Note that since $f \colon \Omega \to \C$ is a priori both continuous and open,
the branch set $B_f$ of $f$ is actually the set of points at which $f$ is not locally injective. Indeed, since a continuous and open bijection is a homeomorphism, we conclude that a continuous and open locally injective map is a local homeomorphism. In particular, if $f$ is locally injective at $x\in \Omega$, then $x$ has a neighborhood $U \subset \Omega$ for which $f|_U \colon U \to fU$ is a homeomorphism and $fU$ is an open neighborhood of $f(x)$. Thus $x\not \in B_f$.

\begin{proof}[Proof of Proposition \ref{prop:PlaneVaisala}]
  Suppose there exists
  $b \in (U_0 \cap B_f ) \setminus \{ x_0 \}$ and
  let $U\subset U_0$ be a normal neighborhood of $b$.
  Since $b \in B_f$, the map $f$ is not locally
  injective at $b$.
  Thus we may fix a point $y_0 \in fU$ for which 
  $\# (U \cap f \inv \{ y_0 \}) \geq 2$. Note that since $U$ is a normal neighborhood of
  $b$, $U \cap f \inv \{ f(b) \} = \{ b \}$ and so $y_0 \neq f(b)$; likewise $y_0 \neq f(x_0)$.

  Note that $fU$ is a planar disk. Thus we may fix two (piecewise linear) arcs 
  $\alpha \colon [0,1] \to fU$ and
  $\beta \colon [0,1]\to fU_0 \setminus \alpha(0,1]$ 
  satisfying
  $\alpha(0) = \beta(0) = y_0$, $\alpha(1) = f(b)$, and 
  $\beta(1) = f(x_0)$.
  Let also $z_1, z_2 \in U \cap f \inv \{ y_0 \}$, $z_1 \neq z_2$.
  By Theorem \ref{thm:PathLifting},
  there exists, for $i=1,2$, lifts 
  $\tilde \alpha_i \colon [0,1] \to U$ and 
  $\tilde \beta_i \colon [0,1] \to U_0$
  of $\alpha$ and $\beta$, respectively, satisfying $\tilde \alpha_i(0)=\tilde \beta_i(0) = z_i$ for $i=1,2$.
  Since $U_0$ and $U$ are normal neighborhoods of $x_0$ and $b$, respectively, 
  we have $\tilde \alpha_i(1) = b$, $\tilde \beta_i(1) = x_0$ for $i=1,2$.

  For $i =1,2$, let 
  $\tilde \gamma_i \colon [0,1] \to U$ be the path 
  \begin{align*}
    t \mapsto
    \begin{cases}
      \tilde \alpha_i (2t), & t \in [0,1/2] \\
      \tilde \beta_i (2t-1), & t \in [1/2,1].
    \end{cases}
  \end{align*}
  Since 
  $\tilde \gamma_1 (0) = \tilde \gamma_2(0)$,
  $\tilde \gamma_1 (1) = \tilde \gamma_2(1)$,
  and $U$ is contained in a simply connected domain, we have by 
  Proposition \ref{prop:JCT-Basic}
  that $\tilde \gamma_1 = \tilde \gamma_2$.
  This is a contradiction and the claim follows.
\end{proof}

We finish with a simple proof of Theorem \ref{thm:LocalStoilow} based on Theorem \ref{thm:discrete_branch} and a covering argument.
For the basic properties and terminology concerning covering maps and covering neighborhoods we refer to \cite{Hatcher}.

\begin{proof}[Proof of Theorem \ref{thm:LocalStoilow}]
  Let $z\in \Sigma$. Since the theorem posits the existence of some neighborhood, the claim 
  is local in nature and thus we may assume that $\Sigma$ and $\Sigma'$ are planar domains. 

  Let $r>0$ be so small that $U(z,f,r)$ is a 
  normal neighborhood of $z$ contained in some simply connected domain in $\Sigma$. 
  Denote 
  \begin{align*}
    U' = U(z,f,r) \setminus \{z\},
    B' = B(f(z),r) \setminus \{f(z)\}
    \text{ and }
    \bD' = \bD\setminus \{0\}.
  \end{align*}
  Since $f$ is discrete by Theorem \ref{thm:light_to_discrete}, 
  \begin{align*}
    U' \cap f \inv \{ y \} 
    \subset \overline{U} \cap f \inv \{ y \} 
  \end{align*}
  is finite for any $y \in fU$. Furthermore, by Proposition \ref{prop:PlaneVaisala}
  $f|_{U'} \colon U' \to B'$ is a local homeomorphism
  and since $U$ is a normal neighborhood,
  $f|_{U'}$ is also a proper map.
  Thus as a proper local homeomorphism
  $f|_{U'}$ is a covering map. Indeed, for any point $y_0 \in B'$ the pre-image $U' \cap f \inv\{ y_0 \}$ is a
  finite set $\{ x_1 , \ldots , x_k \}$ and we may fix disjoint open sets $U_j$
  such that the restriction $f|_{U_j} \colon U_j \to f U_j$, $j = 1, \ldots , k$, is a homeomorphism. 
  The open set $\cap_{j=1}^{k} f(U_j)$ is now a covering neighborhood
  of $y_0$, i.e.\ the restriction of $f$ to its pre-image components is a homeomorphism onto the set.

  As a finite cover of $B'$, the domain $U'$ is a topological punctured disk
  and we conclude that $U$ is a topological disk.
  Let $h_1 \colon U \to \bD$ and
  $h_2 \colon B(f(z),r) \to \bD$ be homeomorphisms 
  with $h_1(z) = 0$ and $h_2(f(z)) = 0$.
  Let also
  \begin{align*}
    g 
    \colonequals h_2 \circ f \circ (h_1|_{U'})\inv 
    \colon \bD' \to \bD'.
  \end{align*}
  Since $f|_{U'} \colon U' \to B'$ is a covering map, so is $g$.
  Thus the induced map $g_\ast \colon \pi_1(\bD') \to \pi_1(\bD')$
  is of the form  $m \mapsto km$ for some $k \in \Z \setminus \{0\}$.
  Note that $k\ne 0$, since the induced homomorphism $g_\ast$ is injective 
  by the homotopy lifting property of covering maps.

  We consider first the case $k > 0$.
  Let $\zeta_k \colon \bD'\to \bD'$ be the covering map $z\mapsto z^k$,
  and let $h' \colon \bD' \to \bD'$ be
  the lift of $g \colon \bD' \to \bD'$ under $\zeta_k$.
  Then $g = \zeta_k \circ h'$ and  
  $h'$ is a homeomorphism, since it is an injective covering map.
  The homeomorphism $h'$ extends to a homeomorphism $h \colon \bD \to \bD$ by the continuity of $\zeta_k$ and 
  hence $f|_U = h_2 \circ \zeta_k \circ h \circ h_1 \equalscolon \phi\inv \circ \zeta_k \circ \psi$.

  In the case $k < 0$ we repeat the previous argument by replacing the homeomorphism
  $h_2$ by the homeomorphism $k_2 \colon B(f(x),r) \to \bD$ defined by $x \mapsto \overline{h(z)}$.
\end{proof}


\begin{thebibliography}{10}

\bibitem{AhlforsSario}
L.~V. Ahlfors and L.~Sario.
\newblock {\em Riemann surfaces}.
\newblock Princeton Mathematical Series, No. 26. Princeton University Press,
  Princeton, N.J., 1960.

\bibitem{Astala-Iwaniec-Martin-book}
K.~Astala, T.~Iwaniec, and G.~Martin.
\newblock {\em {Elliptic partial differential equations and quasiconformal
  mappings in the plane}}, volume~48 of {\em {Princeton Mathematical Series}}.
\newblock Princeton University Press, Princeton, NJ, 2009.

\bibitem{Bers}
L.~Bers.
\newblock {\em Riemann surfaces: Lectures by Lipman Bers}.
\newblock New York University, 1958.

\bibitem{Bonk-Meyer-book}
M.~Bonk and D.~Meyer.
\newblock {\em Expanding Thurston Maps}.
\newblock Mathematical Surveys and Monographs. American Mathematical Society,
  2017.

\bibitem{BridsonHaefliger}
M.~R. Bridson and A.~Haefliger.
\newblock {\em Metric spaces of non-positive curvature}, volume 319 of {\em
  Grundlehren der Mathematischen Wissenschaften}.
\newblock Springer-Verlag, Berlin, 1999.

\bibitem{Engelking}
R.~Engelking.
\newblock {\em General topology}, volume~6 of {\em Sigma Series in Pure
  Mathematics}.
\newblock Heldermann Verlag, Berlin, second edition, 1989.

\bibitem{Floyd}
E.~E. Floyd.
\newblock Some characterizations of interior maps.
\newblock {\em Ann. of Math. (2)}, 51:571--575, 1950.

\bibitem{GRS-Book}
R.~L. Graham, B.~L. Rothschild, and J.~H. Spencer.
\newblock {\em Ramsey theory}.
\newblock Wiley-Interscience Series in Discrete Mathematics and Optimization.
  John Wiley \& Sons, Inc., New York, second edition, 1990.

\bibitem{Hatcher}
A.~Hatcher.
\newblock {\em Algebraic topology}.
\newblock Cambridge University Press, Cambridge, 2002.

\bibitem{Hurewicz-Wallman}
W.~Hurewicz and H.~Wallman.
\newblock {\em Dimension {T}heory}.
\newblock Princeton Mathematical Series, volume 4. Princeton University Press,
  Princeton, N. J., 1941.

\bibitem{Luisto-Characterization}
R.~Luisto.
\newblock A characterization theorem for {BLD}-mappings between metric spaces.
\newblock {\em The Journal of Geometric Analysis}, 27:2081--2097, 2017.

\bibitem{Moise}
E.~E. Moise.
\newblock {\em Geometric topology in dimensions {$2$} and {$3$}}, volume~47 of
  {\em Graduate Texts in Mathematics}.
\newblock Springer-Verlag, New York-Heidelberg, 1977.

\bibitem{Pommerenke}
C.~Pommerenke.
\newblock {\em Univalent functions}.
\newblock Vandenhoeck \& Ruprecht, G\"ottingen, 1975.
\newblock Studia Mathematica/Mathematische Lehrb{\"u}cher, Band XXV.

\bibitem{Rickman-Duke}
S.~Rickman.
\newblock Path lifting for discrete open mappings.
\newblock {\em Duke Math. J.}, 40:187--191, 1973.

\bibitem{RickmanBook}
S.~Rickman.
\newblock {\em Quasiregular mappings}, volume~26 of {\em Ergebnisse der
  Mathematik und ihrer Grenzgebiete (3)}.
\newblock Springer-Verlag, Berlin, 1993.

\bibitem{Stoilow}
S.~{Sto\"\i low}.
\newblock {Sur les transformations continues et la topologie des fonctions
  analytiques.}
\newblock {\em {Ann. Sci. \'Ec. Norm. Sup\'er. (3)}}, 45:347--382, 1928.

\bibitem{Stoilow-Book-Old}
S.~{Sto{\"\i}low}.
\newblock {Le{\c c}ons sur les principes topologiques de la th\'eorie des
  fonc\-tions analytiques. Profess\'ees \`a la Sorbonne et \`a l'universit\'e
  de Cernauti.}
\newblock {X + 148 p. Paris, Gauthier-Villars (Collection de monographies sur
  la th\'eorie des fonctions)}, 1938.

\bibitem{Vaisala}
J.~V{\"a}is{\"a}l{\"a}.
\newblock Discrete open mappings on manifolds.
\newblock {\em Ann. Acad. Sci. Fenn. Ser. A I No.}, 392:10, 1966.

\bibitem{Whyburn-AT}
G.~T. Whyburn.
\newblock {\em Analytic topology}.
\newblock American Mathematical Society Colloquium Publications, Vol. XXVIII.
  American Mathematical Society, Providence, R.I., 1942.

\bibitem{Whyburn-TA}
G.~T. Whyburn.
\newblock {\em Topological analysis}.
\newblock Second, revised edition. Princeton Mathematical Series, No. 23.
  Princeton University Press, Princeton, N.J., 1964.

\end{thebibliography}

\def\cprime{$'$}

\end{document}